\newcolumntype{M}[1]{>{\centering\arraybackslash}m{#1}} 
\definecolor{linkred}{rgb}{0.7,0.2,0.2}
\definecolor{linkblue}{rgb}{0,0.2,0.6}
\numberwithin{figure}{section}
\DeclareFontFamily{OMS}{rsfs}{\skewchar\font'60}
\DeclareFontShape{OMS}{rsfs}{m}{n}{<-5>rsfs5 <5-7>rsfs7 <7->rsfs10 }{}
\DeclareSymbolFont{rsfs}{OMS}{rsfs}{m}{n}
\DeclareSymbolFontAlphabet{\scr}{rsfs}
\DeclareSymbolFontAlphabet{\scr}{rsfs}
\DeclareMathOperator{\codim}{codim}
\DeclareMathOperator{\Pic}{Pic}
\DeclareMathOperator{\rank}{rank}
\DeclareMathOperator{\reg}{reg}
\DeclareMathOperator{\tor}{tor}
\DeclareMathOperator{\Supp}{Supp}
\DeclareMathOperator{\Exc}{Exc}
\DeclareMathOperator{\lcm}{lcm}
\DeclareMathOperator{\Cl}{Cl}
\newcommand{\sE}{\scr{E}}
\newcommand{\sF}{\scr{F}}
\newcommand{\sG}{\scr{G}}
\newcommand{\sL}{\scr{L}}
\newcommand{\sM}{\scr{M}}
\newcommand{\sO}{\scr{O}}
\newcommand{\sQ}{\scr{Q}}
\newcommand{\sT}{\scr{T}}
\newcommand{\cR}{\mathcal R}
\newcommand{\bP}{\mathbb{P}}
\newcommand{\bQ}{\mathbb{Q}}
\newcommand{\bZ}{\mathbb{Z}}
\theoremstyle{plain}
\newtheorem{thm}{Theorem}[section]
\newtheorem{cor}[thm]{Corollary}
\newtheorem{defn}[thm]{Definition}
\newtheorem{lem}[thm]{Lemma}
\newtheorem{prop}[thm]{Proposition}
\theoremstyle{remark}
\newtheorem{c-n-d}[thm]{Claim and Definition}
\newtheorem{example}[thm]{Example}
\newtheorem{rem}[thm]{Remark}
\newtheorem*{rem-nonumber}{Remark}
\numberwithin{equation}{thm}
\setlist[enumerate]{label=(\thethm.\arabic*), before={\setcounter{enumi}{\value{equation}}}, after={\setcounter{equation}{\value{enumi}}}}
\newcommand{\factor}[2]{\left. \raise 2pt\hbox{$#1$} \right/\hskip -2pt\raise -2pt\hbox{$#2$}}
\author{Haidong Liu} %
\address{Haidong Liu, Sun Yat-Sen University, Department of Mathematics, Guangzhou, 510275, China}
\email{\href{liuhd35@mail.sysu.edu.cn}{liuhd35@mail.sysu.edu.cn},\href{jiuguiaqi@gamil.com}{jiuguiaqi@gmail.com}}
\urladdr{\href{https://sites.google.com/view/liuhaidong}{https://sites.google.com/view/liuhaidong}}
\author{Jie Liu} %
\address{Jie Liu, Institute of Mathematics, Academy of Mathematics and Systems Science, Chinese Academy of Sciences, Beijing, 100190, China}
\email{\href{jliu@amss.ac.cn}{jliu@amss.ac.cn}}
\urladdr{\href{http://www.jliumath.com}{http://www.jliumath.com}}
\keywords{Kawamata--Miyaoka type inequality, Fano varieties, Fano threefolds, second Chern class}
\subjclass[2020]{Primary 14J45; Secondary 14J10, 14J30.}
\title[Kawamata--Miyaoka type inequality]{Kawamata--Miyaoka type inequality for $\bQ$-Fano varieties with canonical singularities}
\date{\today}
\begin{document}
	
    \begin{abstract}
		Let $X$ be an $n$-dimensional normal $\bQ$-factorial projective variety with canonical singularities and Picard number one such that $X$ is smooth in codimension two, $-K_X$ is ample and $n\geq 2$. We prove that $X$ satisfies the following Kawamata--Miyaoka type inequality:
		\[
		  c_1(X)^n< 4  c_2(X)\cdot c_1(X)^{n-2}.
		\]
		If additionally $X$ is a threefold with terminal singularities, then a stronger inequality is also obtained.
    \end{abstract}

\maketitle
\tableofcontents
	
    \section{Introduction}
	
	Throughout this paper, we work over the complex number field $\mathbb C$. We denote the Picard number of a normal projective variety $X$ by $\rho(X)$. A normal projective variety is called \emph{$\bQ$-Fano} (resp. \emph{weak $\bQ$-Fano}) if it is $\bQ$-factorial and its anti-canonical divisor is ample (resp. nef and big). According to the minimal model program (MMP), $\bQ$-Fano varieties with mild singularities (e.g. terminal, canonical, klt, etc.) are one of the building blocks of algebraic varieties.
	
	According to \cite[Theorem 1.1]{Birkar2021}, $\bQ$-Fano varieties with canonical singularities and fixed dimension form a bounded family. Thus one may ask if we can find the full list of them. This goal is only achieved in the large index cases. The key ingredient is to produce special divisors on $X$ and then the problem is reduced to surfaces \cite{Mukai1989,Sano1996,Mella1999}. One may expect to apply similar ideas in the general case to reduce the classification problem of higher dimensional $\bQ$-Fano varieties to the classification of some special lower dimensional varieties. The very first step of this approach is to prove the existence of certain effective divisors using the Riemann--Roch formula. To this end, one needs to control the positivity of higher Chern classes. For instance, for any $\bQ$-Fano threefold $X$ with terminal singularities and $\rho(X)=1$, Kawamata proved in \cite[Proposition 1]{Kawamata1992} that there exists a positive number $b_3$ independent of $X$ such that
	\[
	c_1(X)^3 \leq b_3 c_2(X)\cdot c_1(X).
	\]
	This inequality plays a prominent role in the classification of $\bQ$-Fano threefolds with terminal singularities and Picard number one. We refer the reader to \cite{Suzuki2004,Prokhorov2013,Prokhorov2022,BrownKasprzyk2022} and the references therein for more details.
	
	In any dimension $n\geq 2$, according to \cite[Corollary 1.7]{IwaiJiangLiu2023}, there exists a positive number $b_n$ depending only on $n$ such that any $n$-dimensional weak $\bQ$-Fano variety $X$ with terminal singularities satisfies
	\begin{equation}
		\label{e.theoretical-KM}
		c_1(X)^n\leq b_n   c_2(X)\cdot c_1(X)^{n-2}.
	\end{equation}
	Such an inequality is called a \emph{Kawamata--Miyaoka type inequality}. In the viewpoint of the explicit classification of weak $\bQ$-Fano varieties with terminal singularities, it is natural to ask if we can find an effective or even the smallest constant $b_n$ satisfying \eqref{e.theoretical-KM}. For smooth Fano varieties with Picard number one, an effective constant $b_n$ has already been found in \cite[Theorem 1.1]{Liu2019a}. Our main result in this paper is the following effective version of \eqref{e.theoretical-KM} for $\bQ$-Fano varieties with canonical singularities and Picard number one.      
	
	\begin{thm}\label{thm.main}
		Let $X$ be an $n$-dimensional $\bQ$-Fano variety with canonical singularities and $\rho(X)=1$ such that $n\geq 2$ and $X$ is smooth in codimension two. Then we have
		\[
		c_1(X)^n<4  c_2(X)\cdot c_1(X)^{n-2}.
		\]
	\end{thm}
	
	For the proof of Theorem \ref{thm.main}, we will follow the same strategy as that of \cite{Liu2019a}, which can be traced back to Miyaoka's pioneering work \cite{Miyaoka1987a} on the pseudo-effectivity of the second Chern class of generically nef sheaves. However, the main tool used in \cite{Liu2019a} is the theory of minimal rational curves which only holds on smooth varieties. In this paper, we will use the theory of Fano foliations developed in the last decade \cite{AraujoDruel2013,AraujoDruel2014,Druel2017b} to overcome the difficulty caused by singularities and our main contribution is an optimal upper bound for the slopes of rank one subsheaves of tangent sheaves of $\bQ$-Fano varieties with canonical singularities and Picard number one, i.e., Proposition \ref{p.UBrankonedistribution}.

	In dimension three, using Reid's orbifold Riemann--Roch formula and the known classification of $\bQ$-Fano threefolds with terminal singularities and Picard number one, we can refine our general arguments to get the following stronger inequality.
	
	\begin{thm}
		\label{t.KMineq-3dim}
		Let $X$ be a $\bQ$-Fano threefold with terminal singularities and $\rho(X)=1$. Then we have
		\begin{equation*}
			c_1(X)^3\leq \frac{25}{8} c_2(X)\cdot c_1(X).
		\end{equation*}
	\end{thm}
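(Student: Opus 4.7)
The plan is to refine the general argument behind Theorem \ref{thm.main} in the three-dimensional terminal case using two dimension-specific ingredients: Reid's orbifold Riemann--Roch formula and the known classification results for terminal $\bQ$-Fano threefolds. Since a terminal threefold is automatically smooth in codimension two, Theorem \ref{thm.main} already yields the weaker inequality $c_1(X)^3 < 4\,c_2(X) c_1(X)$, and the goal is simply to lower the constant from $4$ to $25/8$.

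I would first revisit the proof of Theorem \ref{thm.main} in dimension three and make the bound on the slope of rank-one saturated subsheaves of $T_X$ supplied by Proposition \ref{p.UBrankonedistribution} fully explicit in terms of $c_1(X)^3$, $c_2(X) c_1(X)$ and the Fano index $q_X$. This produces a first family of numerical inequalities. Complementarily, I would exploit Reid's orbifold Riemann--Roch for $D = -K_X$,
\[
\chi(\sO_X(-K_X)) = 1 + \tfrac{1}{2} c_1(X)^3 + \tfrac{1}{12} c_2(X) c_1(X) + \ell(X),
\]
where $\ell(X)$ collects the contributions from the basket of terminal singularities. Combining this with Kawamata--Viehweg vanishing (so that $h^0(-K_X) = \chi(-K_X)$) and the known effectivity $h^0(X, -K_X) \geq 1$ for terminal $\bQ$-Fano threefolds gives a second family of constraints tying $c_1(X)^3$ and $c_2(X) c_1(X)$ to the singular data. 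A short optimisation that combines the two families yields the sharpened constant $25/8$.

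The principal obstacle will be controlling the singular contribution $\ell(X)$ uniformly, since it depends in a delicate way on the basket $\{(r_i, b_i)\}$. I would handle this by a finite case analysis organised by the Fano index $q_X$ and the shape of the basket: for large $q_X$ the singular contribution is tame and the inequality follows immediately from the combination above, while for small $q_X$ one falls back on the explicit classification tables referenced in the introduction (Suzuki, Prokhorov, Brown--Kasprzyk) to verify the remaining borderline cases by direct inspection. The arithmetic constant $25/8$ should then emerge as the worst case over these finitely many families.
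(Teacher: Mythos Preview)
Your overall strategy---combining a sharpened slope analysis with Reid's orbifold Riemann--Roch and the explicit classification---is indeed what the paper does, but there are two concrete places where your plan as written would not reach the constant $25/8$.

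First, you focus on making the bound from Proposition~\ref{p.UBrankonedistribution} explicit, i.e.\ on bounding rank-\emph{one} subsheaves of $\sT_X$. But the constant $25/8$ actually arises from a case where the maximal destabilising subsheaf $\sF\subset\sT_X$ has rank~\emph{two} (namely $q\bQ(X)=5$ with $c_1(\sF)=\tfrac{4}{5}c_1(X)$; cf.\ Example~\ref{ex.notsemistable} for a rank-two destabiliser). The rank-one bound alone cannot touch this. What the paper does instead (Theorem~\ref{thm.effbound} and Corollary~\ref{cor.effbound}) is to use generic ampleness of $\sT_X$ together with Proposition~\ref{p.UBrankonedistribution} to pin down, for each value of $q\bQ(X)$, the finitely many possible pairs $(q_1,r_1)$ with $c_1(\sF)=\tfrac{q_1}{q}c_1(X)$ and $r_1=\rank\sF$, and to show that the Harder--Narasimhan filtration has length at most two. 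This determines \emph{both} $\mu_\alpha^{\max}$ and $\mu_\alpha^{\min}$, and feeding both into Langer's inequality (Theorem~\ref{t.Langer-ineq}) yields the explicit constants in Table~\ref{tab2}. If you only control $\mu_\alpha^{\max}$, the rank-two case gives nothing better than $c_1^3\le 4\,c_2c_1$ via Corollary~\ref{c.KM-large-r}.

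Second, your case split is reversed. For small index $q\bQ(X)\le 3$ the argument above forces $\sT_X$ to be semi-stable, and Bogomolov--Gieseker gives $c_1^3<3\,c_2c_1$ at once---no classification tables are needed there. It is rather for $q\bQ(X)\ge 9$ that one invokes Prokhorov's explicit list directly, and for the intermediate range $4\le q\bQ(X)\le 8$ one runs a finite search through baskets constrained by \eqref{eq.range} and \eqref{eq.chita}--\eqref{eq.coprimeandint} together with the bounds of Table~\ref{tab2}, then eliminates the few survivors case by case. Relatedly, the Riemann--Roch input you propose (the formula for $\chi(-K_X)$ together with $h^0(-K_X)\ge 1$) is weaker than the identity \eqref{eq.range} coming from $D=K_X$, which gives a clean linear relation between $c_2(X)c_1(X)$ and the basket and is what actually makes the computer search finite.
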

	
	Our statement is actually a bit more precise: Except very few cases, we can replace the constant $25/8$ by $3$, or even by $121/41$ if the Fano index of $X$ is at least $4$ (see \S\,\ref{sub.fanoindex} for the definition of Fano index). We refer the reader to Theorem \ref{thm.lowerboundfor3folds} and Remark \ref{rem.rarecase} for the precise statement.
	
	\subsection*{Acknowledgements} 
	We thank the referees for their detailed reports, which help us to significantly improve the exposition of this paper. We would like to thank Wenhao Ou and Qizheng Yin for very helpful discussions and Yuri Prokhorov for useful communications. J.~Liu is supported by the National Key Research and Development Program of China (No. 2021YFA1002300), the NSFC grants (No. 12001521 and No. 12288201), the CAS Project for Young Scientists in Basic Research (No. YSBR-033) and the Youth Innovation Promotion Association CAS.

	\section{Preliminaries}
	Varieties and manifolds will always be supposed to be irreducible. We will freely use standard terminologies and results of the minimal model program (MMP) as explained in \cite{KollarMori1998}. 
	
	\subsection{Slope and stability of torsion free shaves}\label{sub.ss} 
	Let $X$ be an $n$-dimensional normal projective variety and let $\sF$ be a torsion free coherent sheaf with rank $r\geq 1$ on $X$. The \emph{reflexive hull} of $\sF$ is defined to be its double dual $\sF^{**}$. We say that $\sF$ is \emph{reflexive} if $\sF=\sF^{**}$. For any positive integer $m$, we denote by $\wedge^{[m]}\sF$ the reflexive hull of $\wedge^m\sF$ and the \emph{determinant} $\det(\sF)$ of $\sF$ is defined as $\wedge^{[r]}\sF$. Let $f\colon Y\rightarrow X$ be a surjective morphism. We denote by $f^{[*]}\sF$ the \emph{reflexive pull-back} of $\sF$; in other words, $f^{[*]}\sF$ is the reflexive hull of $f^*\sF$. If $X$ is smooth in codimension $k$, then the Chern classes $c_i(\sF)$ are defined as the elements $j_*c_i(\sF|_{X_{\reg}})$ sitting in the Chow groups $A^i(X)$ for $i\leq k$, where $X_{\reg}$ is the smooth locus of $X$ and $j\colon A^i(X_{\reg})\hookrightarrow A^i(X)$ is the natural inclusion by taking closure \cite[\S\,6]{BorelSerre1958}. In particular, if $X$ is smooth in codimension two, then $c_1(\sF)^2$ and $c_2(\sF)$ are well-defined in $A^2(X)$ and the \emph{discriminant} $\Delta(\sF)$ of $\sF$ is defined as 
	\[
	\Delta(\sF) = 2r c_2(\sF) - (r-1) c_1(\sF)^2.
	\]
	
	Let $D_1, \dots, D_{n-1}$ be a collection of nef $\bQ$-Cartier $\bQ$-divisors on $X$. Denote by $\alpha$ the one cycle $D_1\cdots D_{n-1}$. The \emph{slope} $\mu_{\alpha}(\sF)$ of $\sF$ (with respect to $\alpha$) is defined as
	\[
	\mu_{\alpha}(\sF)\coloneqq \frac{c_1(\sF)\cdot \alpha}{r}.
	\]
	We note that $c_1(\sF)\cdot \alpha$ is well-defined because the $D_i$'s are $\bQ$-Cartier. One can then define (semi-)stability of $\sF$ (with respect to $\alpha$) as usual. Moreover, there exists the so-called \emph{Harder--Narasimhan filtration} of $\sF$, i.e., a filtration as follows
	\[
	0=\sF_0\subsetneq \sF_1\subsetneq \cdots\subsetneq \sF_{m-1} \subsetneq \sF_m=\sF
	\]
	such that the graded pieces, $\sG_{i}\coloneqq \sF_i/\sF_{i-1}$, are semi-stable torsion free sheaves with strictly decreasing slopes \cite[Theorem 2.1]{Miyaoka1987a}. We call $\sF_1$ the \emph{maximal destabilising subsheaf} of $\sF$. We can also define
	\[
	\mu_{\alpha}^{\max}(\sF)\coloneqq \mu_{\alpha}(\sF_1)\quad \textup{and}\quad \mu_{\alpha}^{\min}(\sF)\coloneqq \mu_{\alpha}(\sG_m).
	\] 
	The following easy result is well-known to experts and we include a complete proof for the lack of explicit references.
	
	\begin{lem}
		\label{l.slope-pull-back}
		Let $f\colon Y\rightarrow X$ be a birational morphism between $n$-dimensional normal projective varieties. Let $D_1,\dots,D_{n-1}$ be a collection of nef $\bQ$-Cartier $\bQ$-divisors on $X$. Set $\alpha\coloneqq D_1\cdots D_{n-1}$ and $\alpha'\coloneqq f^*\alpha$. Let $\sF$ and $\sF'$ be non-zero torsion free sheaves on $X$ and $Y$, respectively. 
		\begin{enumerate}
			\item  If $f^*\sF$ agrees with $\sF'$ away from the $f$-exceptional locus, then we have
			\[
			\mu_{\alpha}(\sF) = \mu_{\alpha'}(\sF').
			\]
			
			\item If $\sF'=f^*\sF/\tor$, then we have
			\begin{center}
				$\mu_{\alpha}^{\max}(\sF)=\mu_{\alpha'}^{\max}(\sF')$ \quad and \quad $\mu_{\alpha}^{\min}(\sF)=\mu_{\alpha'}^{\min}(\sF')$.
			\end{center}
		\end{enumerate}
	\end{lem}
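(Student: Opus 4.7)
The plan is to establish the first (general) equality $\mu_{\alpha}(\sF) = \mu_{\alpha'}(\sF')$ by a projection-formula computation, and then derive the statement on $\mu^{\max}$ and $\mu^{\min}$ for $\sF' = f^{*}\sF/\tor$ by applying it to a slope-preserving correspondence between subsheaves.

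\textbf{Part 1.} Let $E \subset Y$ denote the $f$-exceptional locus. By hypothesis $\sF'$ and $f^{*}\sF$ are identified on the dense open $Y\setminus E$, so in particular the ranks agree: $\rank(\sF') = \rank(\sF)$. It therefore suffices to prove the intersection-number identity $c_{1}(\sF')\cdot f^{*}\alpha = c_{1}(\sF)\cdot\alpha$. By the projection formula this reduces to the equality $f_{*}c_{1}(\sF') = c_{1}(\sF)$ in the Weil divisor class group $A^{1}(X)$. Since $X$ is normal, the image $f(E)$ has codimension $\geq 2$ in $X$, so both classes are determined by their restrictions to $X\setminus f(E)$. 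On this open set the restriction of $f$ is an isomorphism $Y\setminus E \xrightarrow{\sim} X\setminus f(E)$ identifying $\sF'$ with $\sF$, and push-forward of a divisor along an isomorphism is the corresponding divisor, so the desired identity holds.

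\textbf{Part 2.} Assume now $\sF' = f^{*}\sF/\tor$. I plan to establish a slope-preserving correspondence between subsheaves of $\sF$ and subsheaves of $\sF'$. To each $\sG\subset\sF$ I associate $\sG' := \Image(f^{*}\sG\to\sF') \subset \sF'$; since $\sG$ and $\sG'$ correspond to each other under the above isomorphism of opens, Part 1 yields $\mu_{\alpha}(\sG) = \mu_{\alpha'}(\sG')$. Conversely, given $\sG' \subset \sF'$, I restrict to $Y\setminus E$ and transport via $f$ to a subsheaf of $\sF|_{X\setminus f(E)}$, then extend uniquely across $f(E)$ to a subsheaf $\sG \subset \sF^{**}$ (passage to the reflexive hull does not alter slopes, since $c_{1}(\sF^{**}) = c_{1}(\sF)$ and any two such extensions differ by a sheaf supported in codimension $\geq 2$). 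Part 1 again gives $\mu_{\alpha}(\sG) = \mu_{\alpha'}(\sG')$. Applying the correspondence to maximal destabilising subsheaves yields $\mu^{\max}_{\alpha}(\sF) = \mu^{\max}_{\alpha'}(\sF')$, and applying it to the kernels of minimal-slope quotients yields $\mu^{\min}_{\alpha}(\sF) = \mu^{\min}_{\alpha'}(\sF')$.

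\textbf{Main obstacle.} Once Part 1 is in place, Part 2 is essentially formal. The central technical point is the identification $f_{*}c_{1}(\sF') = c_{1}(\sF)$ in Part 1, which rests on the codimension $\geq 2$ image of the $f$-exceptional locus in the normal variety $X$ together with the fact that Weil divisor classes on a normal variety are detected on strata of codimension one. In Part 2, the only subtlety is making the extension of subsheaves across $f(E)$ canonical; this is handled by working with the reflexive hull $\sF^{**}$ and observing that it has the same first Chern class as $\sF$.
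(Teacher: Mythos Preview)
Your proof is correct and follows essentially the same approach as the paper: Part~1 via $f_{*}c_{1}(\sF')=c_{1}(\sF)$ and the projection formula, and Part~2 by transporting subsheaves (resp.\ quotients) back and forth across the isomorphism $Y\setminus E \cong X\setminus f(E)$ and invoking Part~1. The only cosmetic differences are that the paper extends subsheaves directly into $\sF$ rather than into $\sF^{**}$, and handles $\mu^{\min}$ by working with quotients rather than their kernels; neither affects the argument.
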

	
	\begin{proof}
		The first statement follows from the equality $f_*c_1(\sF')=c_1(\sF)$ and the projection formula. 
		
		Next, we assume that $\sF'=f^*\sF/\tor$ and let $\sE$ be the maximal destabilising subsheaf of $\sF$. Then $\sE'=f^*\sE/\tor$ is a subsheaf of $\sF'$. So we have $\mu_{\alpha'}^{\max}(\sF')\geq \mu_{\alpha}^{\max}(\sF)$ since $\mu_{\alpha'}(\sE')=\mu_{\alpha}(\sE)=\mu_{\alpha}^{\max}(\sF)$ by the first statement. 
        
        Let now $\sM'$ be the maximal destabilising subsheaf of $\sF'$. Then $\sM'$ descends to a coherent subsheaf $\sM_U$ of $\sF|_{U}$, where $U$ is the maximal open subset of $X$ such that $f^{-1}(U)\rightarrow U$ is an isomorphism. Then there exists a torsion free coherent subsheaf $\sM$ of $\sF$ such that $\sM|_U=\sM_U$, which yields 
        \[
        \mu_{\alpha}^{\max}(\sF)\geq \mu_{\alpha}(\sM) = \mu_{\alpha'}(\sM') = \mu_{\alpha'}^{\max}(\sF').
        \]
        Hence we have $\mu_{\alpha}^{\max}(\sF)=\mu_{\alpha'}^{\max}(\sF')$. The proof of the last equality $\mu_{\alpha}^{\min}(\sF)=\mu_{\alpha'}^{\min}(\sF')$ is very similar, so we leave it to the reader.
	\end{proof}
	
	\subsection{Fano indices}\label{sub.fanoindex}
        Let $X$ be a normal projective variety such that its canonical divisor $K_X$ is $\bQ$-Cartier. Then the \emph{Gorenstein index} $r_X$ of $X$ is defined as the smallest positive integer $m$ such that $mK_X$ is Cartier. For a $\bQ$-Fano variety $X$, we can define the \emph{Fano index} of $X$ in two different ways:
	\begin{equation*}
		\begin{split}
			\iota_X &\coloneqq \max\{q\in\bZ \mid -K_X \sim qA, \quad A\in \Cl (X) \},\\
			\hat{\iota}_X &\coloneqq \sup\{q \in \bQ \mid -K_X \sim_{\mathbb Q}qB, \quad B\in \Cl (X) \}.
		\end{split}
	\end{equation*}
	Let $X$ be a $\bQ$-Fano variety with klt singularities. Then the Picard group $\Pic(X)$ is a finitely generated torsion free $\bZ$-module and the numerical equivalence coincides with the $\mathbb Q$-linear equivalence \cite[Proposition 2.1.2]{IskovskikhProkhorov1999}. In particular, we have 
	\[
	\bZ^{\rho(X)}\cong \Cl (X)/_{\sim_{\bQ}}=\Cl(X)/_\equiv,
	\]
	where ``$\sim_{\bQ}$'' means $\bQ$-linear equivalence and ``$\equiv$'' means numerical equivalence. This implies directly that the Fano indices $\iota_X$ and $\hat{\iota}_X$ are well-defined positive integers such that $\iota_X|\hat{\iota}_X$.

	\subsection{Basic notions of foliations}
	
	The \emph{tangent sheaf} $\sT_X$ of a normal projective variety $X$ is defined to be the dual sheaf $(\Omega^1_X)^{*}$ and the \emph{Chern classes} $c_i(X)$ are defined to be the Chern classes of $\sT_X$ whenever they are well-defined in the sense of the definition in \S\,\ref{sub.ss}.
	
	\begin{defn}
		A foliation on a normal projective variety $X$ is a non-zero coherent subsheaf $\sF\subsetneq \sT_X$ such that
		\begin{enumerate}
			\item $\sF$ is saturated in $\sT_X$, i.e. $\sT_X/\sF$ is torsion free, and
			\item $\sF$ is closed under the Lie bracket.
		\end{enumerate}
		The canonical divisor of $\sF$ is any Weil divisor $K_{\sF}$  such that $\sO_X(-K_{\sF})\cong \det(\sF)$.  
	\end{defn}
	
	A foliation $\sF$ on a normal projective variety $X$ is called \emph{algebraically integrable} if the leaf of $\sF$ through a general point of $X$ is an algebraic variety. For an algebraically integrable foliation $\sF$ with rank $r>0$  such that $K_{\sF}$ is $\bQ$-Cartier, there exists a  unique proper subvariety $T'$ of the Chow variety of $X$ whose general point parametrises the closure of a general leaf of $\sF$ (viewed as a reduced and irreducible cycle in $X$). Let $T$ be the normalisation of $T'$ and let $U\rightarrow T'\times X$ be the normalisation of the universal family, with induced morphisms:
	
	\begin{equation}\label{e.family-leaves}
		\begin{tikzcd}[row sep=large, column sep=large]
			U \arrow[r,"\nu"] \arrow[d,"\pi" left]
			& X \\
			T
			&
		\end{tikzcd}
	\end{equation}
	
	Then $\nu\colon U\rightarrow X$ is birational and, for a general point $t\in T$, the image $\nu(\pi^{-1}(t))\subsetneq X$ is the closure of a leaf of $\sF$. We shall call the diagram \eqref{e.family-leaves} the \emph{family of leaves} of $\sF$ \cite[Lemma 3.9]{AraujoDruel2014}. 
	Thanks to \cite[Lemma 3.7 and Remark 3.12]{AraujoDruel2014}, 
	there exists a canonically defined effective $\bQ$-divisor $\Delta$ on $U$ such that   
	\begin{equation}\label{eq.effdivfoliation}
		K_{\sF_U} + \Delta \sim_{\mathbb Q} \nu^*K_{\sF},
	\end{equation}
	where $\sF_U$ is the algebraically integrable foliation on $U$ induced by $\pi$. In particular, the divisor $\Delta$ is $\nu$-exceptional as $\nu_*K_{\sF_U}=K_{\sF}$. For a general fibre $F$ of $\pi$, the pair $(F,\Delta_F)$ is called the \emph{general log leaf} of $\sF$, where $\Delta_F=\Delta|_F$ \cite[Definition 3.11 and Remark 3.12]{AraujoDruel2014}. Here the restriction $\Delta|_F$ is well-defined because $U$ is smooth at codimension one points of $F$. The following proposition plays a key role in the proof of Proposition \ref{p.UBrankonedistribution}.
	
	\begin{prop}[\protect{\cite[Proposition 4.6]{Druel2017b}}]\label{p.logleaf}
		Let $\sF$ be an algebraically integrable foliation on a normal projective variety $X$ such that $K_{\sF}$ is $\bQ$-Cartier. If $-K_{\sF}$ is nef and big, then the general log leaf $(F,\Delta_F)$ is not klt.
	\end{prop}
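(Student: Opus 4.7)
I would argue by contradiction, assuming the general log leaf $(F, \Delta_F)$ is klt, and derive a contradiction with the bigness of $-K_\sF$ via a canonical-bundle-formula argument applied to the family $\pi\colon (U, \Delta) \to T$.

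The first step is to rewrite \eqref{eq.effdivfoliation} using the fact that $\sF_U$ is the foliation tangent to $\pi$, so that $K_{\sF_U}$ equals the relative canonical $K_{U/T}$ at the generic point of each fiber. This gives
\[
K_U + \Delta \sim_{\bQ} \nu^* K_\sF + \pi^* K_T,
\]
and restricting to a general fiber $F$ yields $K_F + \Delta_F \sim_{\bQ} \nu^* K_\sF|_F$, an anti-nef divisor; combined with the klt hypothesis, $(F, \Delta_F)$ is a klt log Calabi--Yau or klt log Fano pair.

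The main step applies a canonical bundle formula to the klt pair $(U, \Delta)$ over $T$ to show that, up to birational modification, $\nu^* K_\sF$ is $\bQ$-linearly equivalent to a divisor pulled back from $T$. In the Calabi--Yau case $(K_F + \Delta_F) \equiv 0$, this is the Ambro--Fujino formula; in the log Fano case, I would first pass to the relative log canonical model of $(U, \Delta)$ over $T$, whose fibers become klt Calabi--Yau, and then apply Ambro--Fujino. The outcome is a $\bQ$-linear equivalence $\nu^* K_\sF \sim_{\bQ} \pi^* D$ for some divisor $D$ on (a model of) $T$. The bigness of $-\nu^* K_\sF$ on $U$ then forces a contradiction: since $\pi$ has positive-dimensional fibers of dimension $\rank(\sF) > 0$, the pullback $\pi^* D$ has trivial intersection with any curve contained in a general fiber and is therefore not big on $U$.

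The main obstacle is the precise application of the canonical bundle formula in the presence of the $\bQ$-divisor $\Delta$ and the possible singularities of $U$ and $T$. I would handle this by taking a simultaneous log resolution and invoking the Ambro--Fujino variant tailored to klt log pairs, with careful tracking of the $\nu$-exceptional and $\pi$-vertical contributions to $\Delta$ via the negativity lemma. The reduction to the Calabi--Yau fiber case via relative log canonical models adds a layer of delicate but standard birational bookkeeping, which should not affect the core conclusion.
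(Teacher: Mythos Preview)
The paper does not supply its own proof of this proposition; it is quoted from \cite{Druel2017b} and \cite{Liu2023}. So there is no in-paper argument to compare against, and your proposal has to be judged on its own merits.

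Your overall architecture---assume the general log leaf is klt and contradict the bigness of $-\nu^*K_{\sF}$---is the right one, but the case split does not behave as you expect. Since $-K_{\sF}$ is nef and big and $\nu$ is birational, $-\nu^*K_{\sF}$ is nef and big on $U$; writing $-\nu^*K_{\sF}\sim_{\bQ}A+E$ with $A$ ample and $E\geq 0$, a general fibre $F$ of $\pi$ is not contained in $\Supp(E)$, so $-(K_F+\Delta_F)=(-\nu^*K_{\sF})|_F$ is big on $F$. Hence the ``Calabi--Yau fibre'' case never occurs: you are always in the log Fano case, and that is precisely the case where your argument is weakest.

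The step ``pass to the relative log canonical model of $(U,\Delta)$ over $T$, whose fibres become klt Calabi--Yau'' is not correct. A log canonical model, when it exists, has $K+\Delta$ relatively ample, not relatively trivial; and here $K_U+\Delta$ is not even $\pi$-pseudo-effective, so a $(K_U+\Delta)$-MMP over $T$ ends in a Mori fibre space over a base strictly between $T$ and $U$, not in a model with Calabi--Yau fibres over $T$. After such a modification you lose the identification $K_{U/T}+\Delta\sim_{\bQ}\nu^*K_{\sF}+(\text{vertical terms})$, and there is no way to push the Ambro--Fujino conclusion back to say that $\nu^*K_{\sF}$ is pulled back from $T$---indeed that conclusion is simply false, since $\nu^*K_{\sF}|_F\not\equiv 0$. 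The actual arguments in the cited references proceed differently (via positivity results for families of klt pairs rather than a direct canonical bundle formula), and you would need to replace this step by something of that kind.
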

	
	\subsection{Orbifold Riemann--Roch formula}\label{sub.RR-formula}
	
	Let $X$ be a $\mathbb Q$-factorial threefold with terminal singularities. Then $X$ has only isolated singularities and it follows from \cite[Theorems 12, 23 and 25]{Mori1985} that each singular point of $X$ can be deformed to a (unique) collection of a finite number of terminal quotient singularities $\{Q_i\}_{i\in I}$. Write the type of the orbifold point $Q_i$ as $\frac{1}{r_{i}}\left(1, -1, b_{i}\right)$ with $\gcd(b_i,r_i)=1$ and $0< b_i\leq r_i/2$. We can then associate to $Q_i$ the pair $(b_i,r_i)$ and the \emph{basket} $B_X$ of $X$ is defined to be the collection of all such pairs (permitting weights) appeared in the deformations of singular points of $X$ \cite[\S\,6]{Reid1987}. Denote by $\mathcal{R}_X$ the collection of $r_i$ (permitting weights) appearing in $B_X$. For simplicity, we write $\mathcal{R}_X$ as a set of positive integers whose weights appear in superscripts, say for example, 
	\[
	\mathcal{R}_X=\{2, 2,3,5\}=\{ 2^2, 3,5\}.
	\]
	Note that $\lcm (\mathcal R_X)$ coincides with the Gorenstein index $r_X$ of $X$ by \cite[Lemma 1.2]{Suzuki2004}. Let $D$ be a Weil divisor on $X$. According to \cite[Theorem 10.2]{Reid1987}, we have
	\begin{equation}\label{eq.reid}
		\chi(D)=\chi(\mathcal O_X)+\frac{1}{12}D\cdot (D-K_X)\cdot (2D-K_X)+\frac{1}{12} c_2(X)\cdot D+\sum_Qc_Q(D),
	\end{equation}
	where the last sum runs over $B_X$. Recall that the local Weil class group of $X$ at a singular point $P$ is generated by $K_X$ \cite[Corollary 5.2]{Kawamata1988}; so $D\sim lK_X$ around $P$ for some $l\in \bZ$. At an orbifold point $Q$ of type $\frac{1}{r_Q}(1,-1,b_Q)$ associated to $P$, the \emph{local index} of $D$ at $Q$ is defined to be the unique integer $0\leq i_Q<r_Q$ such that $lK_X\sim i_QK_X$ around $Q$ by passing to a deformation if necessary and then the number $c_Q(D)$ is defined as  
	\[
	c_Q(D)\coloneqq -\frac{i_Q(r^2_Q-1)}{12r_Q}+\sum_{j=0}^{i_Q-1}\frac{\overline{jb_Q}(r_Q-\overline{jb_Q})}{2r_Q},
	\]
	where the symbol $\overline{\bullet}$ means the smallest residue mod $r_Q$ and $\sum_{j=0}^{-1} \coloneqq 0$. 
	
	Let now $X$ be a $\bQ$-Fano threefold with terminal singularities. In the case $D=K_X$, the formula \eqref{eq.reid} and Serre's duality imply  
	\begin{equation}\label{eq.range}
		1=\frac{1}{24}c_2(X)\cdot c_1(X) +\frac{1}{24}\sum_{Q} \left(r_Q-\frac{1}{r_Q}\right).
	\end{equation}
	
	In the case $D=-nK_X$ for some positive integer $n$, the formula \eqref{eq.reid} implies
	\begin{align}\label{eq.Reid-RR}
		\chi(-nK_X)&=\frac{1}{12} n(n+1)(2 n+1)c_1(X)^3+2 n+1-l(n+1),
	\end{align}
	where $l(n+1)=\sum_{Q} \sum_{j=0}^{n} \frac{\overline{j b_Q}\left(r_Q-\overline{j b_Q}\right)}{2 r_Q}$ and the first sum runs over $B_X$. When $n=1$, together with the Kawamata--Viehweg vanishing theorem, we immediately get the following easy but useful lemma.
 
	\begin{lem}\label{lem.integer}
		Let $X$ be a $\mathbb Q$-Fano threefold with terminal singularities. Then we have
		\begin{align}\label{eq.RR-Fano}
			h^0(X,-K_X)=\frac{1}{2}c_1(X)^3+3-l(2)\in \mathbb Z_{\geq 0}.
		\end{align}
	\end{lem}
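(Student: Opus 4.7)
The plan is to combine the orbifold Riemann--Roch formula \eqref{eq.Reid-RR} with the Kawamata--Viehweg vanishing theorem. First I would specialise \eqref{eq.Reid-RR} to $n=1$, which yields
\[
\chi(-K_X)=\frac{1}{12}\cdot 1\cdot 2\cdot 3 \cdot c_1(X)^3+3-l(2)=\frac{1}{2}c_1(X)^3+3-l(2).
\]
This immediately gives the desired numerical expression, provided I can show that all higher cohomologies of $-K_X$ vanish so that $h^0(X,-K_X)=\chi(X,-K_X)$.

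For the vanishing, note that $X$ is a terminal $\mathbb Q$-Fano threefold, hence in particular klt, and $-K_X$ is ample by assumption. Writing $-K_X=K_X+(-2K_X)$ with $-2K_X$ ample, the Kawamata--Viehweg vanishing theorem applied to the klt pair $(X,0)$ yields $H^i(X,-K_X)=0$ for every $i>0$. Combining this with the Riemann--Roch computation above gives
\[
h^0(X,-K_X)=\chi(X,-K_X)=\frac{1}{2}c_1(X)^3+3-l(2).
\]
Finally, $h^0(X,-K_X)\in\mathbb Z_{\geq 0}$ holds trivially since it is the dimension of a complex vector space. There is no real obstacle here: the only thing to be slightly careful about is confirming that the Riemann--Roch formula from Reid, which is stated for general Weil divisors on terminal threefolds, gives exactly the claimed polynomial after specialising the cubic $\frac{1}{12}n(n+1)(2n+1)$ at $n=1$, and that the Kawamata--Viehweg vanishing applies in the $\mathbb Q$-Cartier setting of terminal singularities.
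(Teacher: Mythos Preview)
Your proposal is correct and follows exactly the same route as the paper: specialise the orbifold Riemann--Roch formula \eqref{eq.Reid-RR} at $n=1$ and invoke Kawamata--Viehweg vanishing on the klt variety $X$ to identify $\chi(-K_X)$ with $h^0(X,-K_X)$. The paper states this in a single sentence, and your write-up simply spells out the details.
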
    
 
	In the case $-K_X\sim \iota_X A$, where $A$ is an ample Weil divisor, by \cite[Lemma 1.2]{Suzuki2004}, the numbers $A^3$ and $r_X$ satisfy  
	\begin{equation}\label{eq.coprimeandint}
		\gcd(r_X, \iota_X)=1\quad \textup{and}\quad  r_X A^3\in \mathbb Z_{>0}.
	\end{equation}
    If $\iota_X\geq 3$, combining \eqref{eq.reid} with the Kawamata--Viehweg vanishing yields
	\begin{equation}\label{eq.chita}
		0=\chi(tA)=1+\frac{t(\iota_X + t)(\iota_X + 2t)}{12}A^3+\frac{t}{12 \iota_X}c_2(X)\cdot c_1(X) +\sum_Qc_Q(tA)
	\end{equation}
	for $-\iota_X<t<0$.  In particular, when $t=-1$, we obtain
	\begin{equation}\label{eq.primvolume}
		A^3=\frac{12}{(\iota_X-1)(\iota_X-2)}\left(1-\frac{ c_2(X)\cdot c_1(X)}{12\iota_X}+\sum_Qc_Q(-A)\right). 
	\end{equation}

	\section{Kawamata--Miyaoka type inequality}
	
	\subsection{Langer's inequality and its variants}
	
	We discuss an inequality proved by Langer in \cite{Langer2004} and its several variants in this subsection. 
	
	\begin{thm}[\protect{\cite[Theorem 5.1]{Langer2004}}]
		\label{t.Langer-ineq} 
		Let $X$ be an $n$-dimensional normal projective variety such that $n\geq 2$ and $X$ is smooth in codimension two. Let $D_1,\dots,D_{n-1}$ be a collection of nef $\bQ$-Cartier $\bQ$-divisors on $X$ and set $\alpha\coloneqq D_1\cdots D_{n-1}$. Then for any torsion free sheaf $\sF$ with rank $r>0$ on $X$, we have
		\[
		(D_1\cdot \alpha)(\Delta(\sF)\cdot D_2\cdots D_{n-1}) + r^2(\mu_{\alpha}^{\max}(\sF)-\mu_{\alpha}(\sF))(\mu_{\alpha}(\sF)-\mu^{\min}_{\alpha}(\sF))\geq 0.
		\] 
	\end{thm}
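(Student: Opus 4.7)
The plan is to follow Langer's original two-step strategy: first establish the Bogomolov--Gieseker inequality in the semistable case, then reduce the general case via the Harder--Narasimhan filtration.

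\textbf{Semistable case.} When $\sF$ is $\alpha$-semistable, the three slopes coincide and the claim collapses to $(D_1\cdot\alpha)\bigl(\Delta(\sF)\cdot D_2\cdots D_{n-1}\bigr)\ge 0$. Since $D_1\cdot\alpha\ge 0$ by nefness, one needs only the higher-dimensional Bogomolov inequality $\Delta(\sF)\cdot D_2\cdots D_{n-1}\ge 0$ for $\alpha$-semistable torsion free sheaves. First approximate the $D_i$ by ample $\bQ$-divisors and then invoke a Mehta--Ramanathan style restriction theorem: the restriction of $\sF$ to a general member of $|mD_i|$ for $m\gg 0$ remains semistable with respect to the restricted polarisation. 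Iterating $n-2$ times reduces the problem to a surface cut out by a general complete intersection; because $X$ is smooth in codimension two, this surface is smooth, and the classical Bogomolov inequality applies. A continuity argument transfers the conclusion back to the original nef $\bQ$-divisors.

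\textbf{Reduction via HN filtration.} Given the HN filtration of $\sF$ with semistable graded pieces $\sG_i$ of rank $r_i$ and slope $\mu_i$, the algebraic engine is the identity
\[
\Delta(\sE)=\tfrac{r}{r'}\Delta(\sF')+\tfrac{r}{r''}\Delta(\sF'')-\tfrac{1}{r'r''}\bigl(r''c_1(\sF')-r'c_1(\sF'')\bigr)^2
\]
for an extension $0\to\sF'\to\sE\to\sF''\to 0$, verified by direct Chern class computation. Iterating along the filtration and intersecting with $D_2\cdots D_{n-1}$ expresses $\Delta(\sF)\cdot D_2\cdots D_{n-1}$ as $\sum_i(r/r_i)\Delta(\sG_i)\cdot D_2\cdots D_{n-1}$ minus a non-negative combination of terms of the form $\bigl(r_jc_1(\sG_i)-r_ic_1(\sG_j)\bigr)^2\cdot D_2\cdots D_{n-1}$. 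The first sum is non-negative by the semistable case. For the correction, the Hodge index theorem applied on a generic surface section (using $D_1$ as the polarisation there) gives
\[
-\bigl(r_jc_1(\sG_i)-r_ic_1(\sG_j)\bigr)^2\cdot D_2\cdots D_{n-1}\;\ge\;-\,\frac{\bigl(r_ir_j(\mu_i-\mu_j)\bigr)^2}{D_1\cdot\alpha},
\]
and a direct algebraic manipulation using $r\mu_\alpha(\sF)=\sum r_i\mu_i$ together with $\mu_1=\mu_\alpha^{\max}(\sF)$ and $\mu_m=\mu_\alpha^{\min}(\sF)$ bounds the total correction by $r^2(\mu_\alpha^{\max}(\sF)-\mu_\alpha(\sF))(\mu_\alpha(\sF)-\mu_\alpha^{\min}(\sF))/(D_1\cdot\alpha)$. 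Multiplying through by $D_1\cdot\alpha$ yields exactly the desired inequality.

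\textbf{Main obstacle.} The decisive difficulty is the Bogomolov--Gieseker inequality for semistable sheaves on a normal, possibly singular variety with only $\bQ$-Cartier polarisations. Both the restriction theorem and the well-definedness of the codimension-two intersection $\Delta(\sF)\cdot D_2\cdots D_{n-1}$ require delicate handling of the singular locus, which is precisely why smoothness in codimension two is assumed: this hypothesis keeps $c_1(\sF)^2$ well defined in $A^2(X)$ and ensures that general complete intersection surfaces avoid the singular locus. Once the semistable case is secured, the HN reduction amounts to elementary bookkeeping combining the extension formula with the Hodge index theorem.
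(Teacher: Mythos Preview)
Your outline is essentially a sketch of Langer's original proof, and in broad strokes it is correct. However, the paper does \emph{not} reprove the inequality from scratch; it takes a completely different and much shorter route.

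The paper's argument is a reduction to the smooth case. One passes to a resolution $f\colon X'\to X$ which is an isomorphism over $X_{\reg}$, sets $\sF':=f^*\sF/\tor$ and $D_i':=f^*D_i$, and simply invokes \cite[Theorem~5.1]{Langer2004} on the smooth variety $X'$. The two ingredients needed to descend the inequality are (i) Lemma~\ref{l.slope-pull-back}, which says $\mu_{\alpha}^{\max}(\sF)=\mu_{\alpha'}^{\max}(\sF')$ and likewise for $\mu^{\min}$, and (ii) the projection formula together with the hypothesis $\codim(X\setminus X_{\reg})\ge 3$, which gives $f_*\Delta(\sF')=\Delta(\sF)$. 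Thus the entire proof is three lines once Langer's theorem on smooth varieties is taken as a black box. The case $\alpha\equiv 0$ is disposed of at the start since both sides vanish.

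Compared with your approach, the paper avoids every delicate point you flagged: there is no need for a restriction theorem on a singular variety, no approximation of nef by ample with an accompanying continuity argument for semistability, and no Hodge index computation on complete intersection surfaces inside $X$. All of those technicalities are absorbed into the cited smooth-case result. Your approach has the merit of being self-contained, but note that the perturbation step (``approximate $D_i$ by ample and use continuity'') is genuinely subtle: $\alpha$-semistability need not persist under perturbation of the polarisation, so one cannot directly apply Mehta--Ramanathan to the perturbed data without further argument. The resolution trick sidesteps this entirely.
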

	
	\begin{proof}
		If $\alpha$ is numerically trivial, then the left-hand side is equal to zero. Thus we may assume that $\alpha$ is not numerically trivial. Let $f\colon X'\rightarrow X$ be a resolution such that the induced morphism $f^{-1}(X_{\reg})\rightarrow X_{\reg}$ is an isomorphism. Set $\alpha'\coloneqq f^*\alpha$. Then $\alpha'$ is also not numerically trivial. Applying \cite[Theorem 5.1]{Langer2004} to $\sF'\coloneqq f^*\sF/\tor$ and $\alpha'$ yields
		\[
		(D'_1\cdot \alpha')(\Delta(\sF')\cdot D'_2\cdots D'_{n-1})+r^2(\mu^{\max}_{\alpha'}(\sF')-\mu_{\alpha'}(\sF'))(\mu_{\alpha'}(\sF')-\mu_{\alpha'}^{\min}(\sF'))\geq 0,
		\]
		where $D'_i\coloneqq f^*D_i$. Moreover, as $\codim(X\setminus X_{\reg})\geq 3$, we have $\Delta(\sF)=f_*\Delta(\sF')$. Now the result follows from Lemma \ref{l.slope-pull-back} and the projection formula.
	\end{proof}
	
	Recall that a \emph{Mehta--Ramanathan-general curve} $C$ on an $n$-dimensional normal projective variety is the complete intersection of $n-1$ sufficiently ample divisors in general positions. 
	
	\begin{defn}
		Let $\sF$ be a torsion free sheaf on a normal projective variety $X$. Then $\sF$ is called generically nef (resp. generically ample) if $\sF|_C$ is nef (resp. ample) for any Mehta--Ramanathan-general curve $C$.
	\end{defn}
	
	\begin{rem}
		\label{r.genericallynef}
		By the modified Mumford--Mehta--Ramanathan theorem  \cite[Corollary 3.13]{Miyaoka1987a}, a sheaf $\sF$ is generically nef if and only if for any non-zero torsion free quotient $\sQ$ of $\sF$, we have $c_1(\sQ)\cdot D_1\cdots D_{n-1}\geq 0$ for any nef divisors $D_i$. 
	\end{rem}
	
	With the notions above, one can derive the following result.
	
	\begin{prop}
		\label{l.IJL-type-ineq}
		Notation and assumptions are as in Theorem \ref{t.Langer-ineq}. Then for any generically nef torsion free sheaf $\sF$ on $X$ with $c_1(\sF)\equiv D_1$, we have
		\begin{equation}
			\label{e.IJL-type-ineq}
			2c_2(\sF)\cdot D_2\cdots D_{n-1} \geq c_1(\sF)^2\cdot D_2\cdots D_{n-1} - \mu_{\alpha}^{\max}(\sF),
		\end{equation}
		and the equality holds only if either $\mu_{\alpha}^{\min}(\sF)=0$ or $\sF$ is semi-stable.
		
		If additionally $\sF$ is generically ample and the $D_i$'s are ample, then the inequality \eqref{e.IJL-type-ineq} is strict unless $\sF$ is semi-stable and $\Delta(\sF)\cdot D_2\cdots D_{n-1}=0$. 
	\end{prop}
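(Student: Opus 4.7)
The plan is to apply Langer's inequality (Theorem \ref{t.Langer-ineq}) directly to $\sF$ and then perform one algebraic rearrangement in which the generic nefness enters as a single non-negative term. Set $r=\rank(\sF)$, $\beta = D_2\cdots D_{n-1}$, $\mu=\mu_\alpha(\sF)$, $\mu^{\max}=\mu_\alpha^{\max}(\sF)$, $\mu^{\min}=\mu_\alpha^{\min}(\sF)$. The assumption $c_1(\sF)\equiv D_1$ gives the crucial identification $D_1\cdot\alpha = c_1(\sF)^2\beta = r\mu$. Substituting this together with the expansion $\Delta(\sF)\beta = 2rc_2(\sF)\beta - (r-1)c_1(\sF)^2\beta$ into Theorem \ref{t.Langer-ineq} and dividing by $r^2$ produces
\[
2\mu c_2(\sF)\beta - (r-1)\mu^2 + (\mu^{\max}-\mu)(\mu-\mu^{\min}) \geq 0.
\]

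Next I would exploit the generic nefness. Since the last graded piece of the Harder--Narasimhan filtration of $\sF$ is a torsion free quotient of $\sF$, Remark \ref{r.genericallynef} yields $\mu^{\min}\geq 0$. The heart of the argument is the algebraic identity
\[
(r-1)\mu^2 - (\mu^{\max}-\mu)(\mu-\mu^{\min}) = \mu(r\mu - \mu^{\max}) + \mu^{\min}(\mu^{\max}-\mu),
\]
in which the second summand is non-negative because $\mu^{\min}\geq 0$ and $\mu^{\max}\geq \mu$. Combining with the Langer estimate gives $2\mu c_2(\sF)\beta \geq \mu(c_1(\sF)^2\beta - \mu^{\max})$, and dividing by $\mu$ (when $\mu>0$) produces \eqref{e.IJL-type-ineq}. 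The edge case $\mu=0$ forces every Harder--Narasimhan slope to vanish (they are all non-negative and sum to $r\mu$), so $\sF$ is semistable of slope zero and the inequality reduces to the Bogomolov-type bound $c_2(\sF)\beta\geq 0$, which is obtained from Theorem \ref{t.Langer-ineq} after a small ample perturbation of $D_2,\dots,D_{n-1}$ and passage to the limit.

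For the equality case of \eqref{e.IJL-type-ineq}, tracing back through the chain requires $\mu^{\min}(\mu^{\max}-\mu)=0$, i.e.\ either $\mu^{\min}=0$ or $\sF$ is semistable. For the final refinement, if $\sF$ is generically ample and $D_i\equiv A$ then by Remark \ref{r.genericallynef} no quotient of $\sF$ has vanishing slope with respect to $A^{n-1}$, hence $\mu^{\min}>0$; thus equality forces $\sF$ to be semistable, in which case the Langer estimate collapses to $(D_1^2A^{n-2})\cdot\Delta(\sF)A^{n-2}\geq 0$ with $D_1^2A^{n-2}>0$, and equality in \eqref{e.IJL-type-ineq} further forces $\Delta(\sF)A^{n-2}=0$.

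The main obstacle is not any single deep step but the careful bookkeeping of the equality conditions through the rearrangement; the key conceptual point is the identity above, which isolates the exact non-negative term contributed by $\mu^{\min}\geq 0$ and shows that it is precisely what converts the symmetric Langer inequality into the one-sided inequality \eqref{e.IJL-type-ineq} involving only $\mu^{\max}$.
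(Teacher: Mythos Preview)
Your proof is correct and follows essentially the same route as the paper: your algebraic identity is just a repackaging of the paper's step of replacing the factor $(\mu-\mu^{\min})$ by $\mu$ in Langer's inequality (using $\mu^{\min}\geq 0$) before dividing through by $r\mu=D_1\cdot\alpha$, and the equality analysis is identical. The only point where you diverge is the degenerate case $\mu=0$: the paper dispatches this by citing \cite[Theorem 6.1]{Miyaoka1987a} directly, whereas your perturbation sketch is a bit loose (after perturbing $D_2,\dots,D_{n-1}$ the sheaf need not remain semi-stable, so Langer's inequality does not immediately yield $c_2(\sF)\beta'\geq 0$); citing Miyaoka here is the cleaner fix.
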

	
	\begin{proof}
		Since $\sF$ is generically nef and the $D_i$'s are nef, we have $\mu_{\alpha}^{\min}(\sF)\geq 0$ by Remark \ref{r.genericallynef}. In particular, if $D_1\cdot \alpha=0$, then $\sF$ is actually semi-stable and 
		\[
		\mu_{\alpha}^{\max}(\sF)=\mu_{\alpha}(\sF)=\mu_{\alpha}^{\min}(\sF)=0.
		\]
		Then the right-hand side of \eqref{e.IJL-type-ineq} is zero and thus the result follows from \cite[Theorem 6.1]{Miyaoka1987a}. So we may assume that $D_1\cdot \alpha>0$. Then Theorem \ref{t.Langer-ineq} implies
		\[
		(D_1\cdot\alpha)(\Delta(\sF)\cdot D_2\cdots D_{n-1}) + r^2(\mu_{\alpha}^{\max}(\sF)-\mu_{\alpha}(\sF))\mu_{\alpha}(\sF)\geq 0,
		\]
		and the equality holds only if either $\mu_{\alpha}^{\min}(\sF)=0$ or $\sF$ is semi-stable. In particular, as $D_1\cdot\alpha=r\mu_{\alpha}(\sF)>0$, we obtain
		\[
		\Delta(\sF)\cdot D_2\cdots D_{n-1} + r\mu_{\alpha}^{\max}(\sF) - c_1(\sF)^2\cdot D_2\cdots D_{n-1}\geq 0,
		\]
		and the equality holds only if either $\mu_{\alpha}^{\min}(\sF)=0$ or $\sF$ is semi-stable. Then an easy computation yields the first statement. If $\sF$ is generically ample and the $D_i$'s are ample, then $\mu_{\alpha}^{\min}(\sF)>0$ and therefore the inequality \eqref{e.IJL-type-ineq} is strict unless $\sF$ is semi-stable and $\Delta(\sF)\cdot D_2\cdots D_{n-1}=0$.
	\end{proof}
	
	\begin{cor}
		\label{c.KM-large-r}
		Let $X$ be an $n$-dimensional normal projective variety such that $n\geq 2$ and $X$ is smooth in codimension two. Let $\sF$ be a generically nef torsion free sheaf on $X$ such that $D_1\coloneqq  c_1(\sF)$ is nef. Let $D_2,\dots, D_{n-1}$ be a collection of nef $\bQ$-Cartier $\bQ$-divisors on $X$ and set $\alpha\coloneqq D_1\cdots D_{n-1}$. Then we have
		\begin{equation}
			\label{e.rank2}
			\frac{r'-1}{2r'}c_1(\sF)^2\cdot D_2\cdots D_{n-1} \leq  c_2(\sF)\cdot D_2\cdots D_{n-1},
		\end{equation}
		where $r'$ is the rank of the maximal destabilising subsheaf of $\sF$ with respect to $\alpha$.
  
         If additionally $\sF$ is generically ample and the $D_i$'s are ample, then the inequality \eqref{e.rank2} is strict unless $\sF$ is semi-stable and $\Delta(\sF)\cdot D_2\cdots D_{n-1}=0$.
	\end{cor}
	
	\begin{proof}
		Let $\sE$ be the maximal destabilising subsheaf of $\sF$. As $\sF$ is generically nef, we have
		\[
		\mu_{\alpha}^{\max}(\sF) = \mu_{\alpha}(\sE) \leq \frac{c_1(\sF)\cdot \alpha}{r'}.
		\]
		Then the statements follow immediately from Proposition \ref{l.IJL-type-ineq}.
	\end{proof}

	\subsection{Proof of Theorem \ref{thm.main}}
	
	We start with the following result, which is essentially proved in \cite{Ou2023}. See also \cite[Theorem 1.3]{Peternell2012} for the smooth case.
	
	
	\begin{prop}
		\label{p.generical-ampleness}
		Let $X$ be a weak $\bQ$-Fano variety with klt singularities. Then $\sT_X$ is generically ample.
	\end{prop}
	
	\begin{proof}
		We assume to the contrary that $\sT_X$ is not generically ample. Since $\sT_X$ is generically nef by \cite[Theorem 1.3]{Ou2023}, there exists a non-trivial torsion free quotient $\sT_X\rightarrow \sQ$ such that $\det(\sQ)\equiv 0$ as in the proof of \cite[Theorem 1.7]{Ou2023}. As $X$ is a weak $\bQ$-Fano variety with klt singularities, there exists a positive integer $m$ such that $\det(\sQ)^{[\otimes m]}\cong \sO_X$ (see the proof of \cite[Proposition 2.1.2]{IskovskikhProkhorov1999}). In particular, there exists a finite quasi-\'etale cover $f\colon \widetilde{X}\rightarrow X$ such that $f^{[*]}\det(\sQ)\cong \sO_{\Tilde{X}}$. On the other hand, as $\sQ$ is a quotient of $\sT_X$, it induces an injection 
		\[
		\det(\sQ^*)\rightarrow \Omega^{[p]}_X
		\]
		of coherent sheaves, where $1\leq p\coloneqq\rank (\sQ)\leq  \dim X$. Then taking reflexive pull-back yields an injection
		\[
		\sO_{\widetilde{X}}\cong f^{[*]}\det(\sQ^*) \rightarrow \Omega_{\widetilde{X}}^{[p]} = f^{[*]}\Omega^{[p]}_X.
		\]
		This means $H^0(\widetilde{X},\Omega^{[p]}_{\widetilde{X}})\not=0$. However, since $f$ is a finite quasi-\'etale morphism, it follows that $-K_{\widetilde{X}}=-f^*K_X$ is a nef and big $\bQ$-Cartier divisor and $\widetilde{X}$ has only klt singularities by \cite[Proposition 5.20]{KollarMori1998}. Thus $\widetilde{X}$ is rationally connected by \cite{Zhang2006} and consequently $H^0(\widetilde{X},\Omega^{[p]}_{\widetilde{X}})=0$ for any $1\leq p\leq \dim \widetilde{X}$ by \cite[Theorem 5.1]{GrebKebekusKovacsPeternell2011}, which is a contradiction.
	\end{proof}
	
	The following result is a slight generalisation of \cite[Corollary 7.5]{IwaiJiangLiu2023}.
	
	\begin{prop}
		\label{p.IJL-rkonefoliations}
		Let $X$ be an $n$-dimensional $\bQ$-factorial variety with klt singularities such that $n\geq 2$, $X$ is smooth in codimension two and $D_1\coloneqq -K_X$ is nef. Let $D_2,\dots, D_{n-1}$ be a collection of nef $\bQ$-Cartier $\bQ$-divisors and set $\alpha\coloneqq D_1\cdots D_{n-1}$. Then one of the following statements holds.
		\begin{enumerate}
			\item $c_1(X)^2\cdot D_2\cdots D_{n-1}\leq 4 c_2(X)\cdot  D_2\cdots D_{n-1}$ and the inequality is strict if the $D_i$'s are ample.
			
			\item There exists a rational map $f\colon X\dashrightarrow T$, whose general fibres are rational curves, such that the relative tangent sheaf $\sT_{f}$ is the maximal destabilising subsheaf of $\sT_X$ with respect to $\alpha$.
		\end{enumerate}
	\end{prop}
	
	\begin{proof}
		By \cite[Theorem 1.3]{Ou2023}, the tangent sheaf $\sT_X$ is generically nef. Let $\sE$ be the maximal destabilising subsheaf of $\sF$ with respect to $\alpha$. Denote by $r'$ the rank of $\sE$. Suppose first $r'\geq 2$. Then by Corollary \ref{c.KM-large-r}, we have
		\begin{equation}
			\label{e.IneqIJL}
			c_1(X)^2\cdot D_2\cdots D_{n-1} \leq \frac{2r'}{r'-1} c_2(X)\cdot D_2\cdots D_{n-1} \leq 4c_2(X)\cdot D_2\cdots D_{n-1}.
		\end{equation}
		Assume in addition that the $D_i$'s are ample. Then we have
		\[
		c_1(X)^2\cdot D_2\cdots D_{n-1} = D_1^2\cdot  D_2\cdots D_{n-1}>0.
		\]
		In particular, the second inequality in \eqref{e.IneqIJL} is strict unless $r'=2$. On the other hand, since $D_1\coloneqq  -K_X$ is ample, the sheaf $\sT_X$ is generically ample by Proposition \ref{p.generical-ampleness}. Then it follows from Corollary \ref{c.KM-large-r} that the first inequality in \eqref{e.IneqIJL} is strict unless $\sT_{X}$ is semi-stable and $\Delta(\sT_X)\cdot  D_2\cdots D_{n-1}=0$. So we have
		\[
		c_1(X)^2\cdot D_2\cdots D_{n-1} < 4 c_2(X)\cdot D_2\cdots D_{n-1}
		\]
		unless $n=2$, $\sT_X$ is semi-stable and $\Delta(\sT_X)=0$. In the latter case, it follows from \cite[Theorem 1.2]{GrebKebekusPeternell2021} that $X$ is a quasi-Abelian surface, i.e., a finite quasi-\'etale quotient of an Abelian surface, which is absurd. 
		
		Suppose next $r'=1$. Then $\sE$ is closed under the Lie bracket and thus it defines a rank one foliation on $X$. Moreover, we have 
		\[
		nc_1(\sE)\cdot \alpha>c_1(X)\cdot \alpha\geq 0.
		\]
		So  $K_{\sE}$ is not pseudo-effective and \cite[Theorem 1.1]{CampanaPaun2019} says that $\sE$ is algebraically integrable such that its general leaves are rational curves. Then we define $f\colon X\dashrightarrow T$ to be the rational map induced by the universal family of leaves of $\sE$.
	\end{proof}
	
	Given two $\bQ$-Cartier $\bQ$-divisor classes $\delta$ and $\delta'$ on a projective variety $X$, we will denote by $\delta\leq \delta'$ (resp. $\delta<\delta'$) if $\delta'-\delta$ is effective (resp. effective and non-zero). The following result is the key ingredient of the proof of Theorem \ref{thm.main}. 
	
	\begin{prop}\label{p.UBrankonedistribution}
		Let $X$ be a $\bQ$-Fano variety with canonical singularities and $\rho(X)=1$ such that $\dim(X)\geq 2$. For any rank one subsheaf $\sL$ of $\sT_X$, we have
		\begin{equation}\label{e.upper-bound-c_1-rank-one-foliation}
			2c_1(\sL) \leq c_1(X).
		\end{equation}
		If additionally $X$ has only terminal singularities, then we have
		\begin{equation}\label{e.terminimal-upper-bound-c_1-rank-one-foliation}
			\frac{2r_X+1}{r_X} c_1(\sL) \leq c_1(X),  
		\end{equation}
		where $r_X$ is the Gorenstein index of $X$. 
	\end{prop}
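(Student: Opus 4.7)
The plan is to reduce to an algebraically integrable foliation and then analyse the general log leaf. After replacing $\sL$ by its saturation in $\sT_X$ (which only increases $c_1$, as the successive quotient is torsion), I may assume $\sL$ is saturated; a rank one saturated subsheaf of the tangent sheaf is automatically involutive, so $\sL$ is a foliation. If $c_1(\sL)$ is not strictly positive, then by the Picard number one hypothesis both inequalities hold trivially (the right-hand side is ample). So I assume $c_1(\sL) = -K_\sL$ is ample, hence big and nef. By \cite[Theorem 1.1]{CampanaPaun2019}, $\sL$ is algebraically integrable, and its general leaf, being rationally connected of dimension one, is a rational curve.

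Next, I form the family of leaves \eqref{e.family-leaves}. For a general fibre $F$ of $\pi$, the normalisation $\nu|_F \colon F \to C := \nu(F) \subset X$ of a general leaf closure is birational and $F \cong \bP^1$. Restricting \eqref{eq.effdivfoliation} to $F$ and taking degrees on $\bP^1$ gives
\[
\deg \Delta_F = 2 - c_1(\sL) \cdot C.
\]
Proposition \ref{p.logleaf} tells us that $(F, \Delta_F)$ is not klt; since $F$ is a smooth rational curve, this forces some point of $\Delta_F$ to have coefficient at least one, so $\deg \Delta_F \geq 1$ and consequently
\[
c_1(\sL) \cdot C \leq 1.
\]

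For the canonical bound, I choose a resolution $f \colon Y \to X$ and write $K_Y = f^* K_X + E$ with $E$ effective (since $X$ is canonical). The strict transform $\widetilde C$ is a rational curve through a general point of the smooth variety $Y$, so $-K_Y \cdot \widetilde C \geq 2$ by the standard deformation-theoretic bound for free rational curves; combined with $E \cdot \widetilde C \geq 0$, this yields $c_1(X) \cdot C \geq 2$. Using $\rho(X) = 1$ to write $c_1(\sL) \equiv_\bQ \lambda\, c_1(X)$, I obtain $\lambda \leq 1/2$, that is, $2\,c_1(\sL) \leq c_1(X)$.

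For the terminal refinement, every discrepancy $a_i$ of $f$ is at least $1/r_X$. If $\widetilde C$ meets the exceptional locus at all, the previous estimate upgrades to $c_1(X) \cdot C \geq (2r_X+1)/r_X$, giving $\lambda \leq r_X/(2r_X+1)$ immediately. The main obstacle is the borderline subcase $\widetilde C \cap \Exc(f) = \emptyset$ together with $-K_X \cdot C = 2$; here the plan is to exploit the integrality of $c_1(\sL) \cdot C$ coming from the Gorenstein index $r_X$, together with a refinement of the non-klt estimate for $\Delta_F$, in order to upgrade $c_1(\sL) \cdot C \leq 1$ to $c_1(\sL) \cdot C \leq 2r_X/(2r_X+1)$, thereby recovering the terminal bound.
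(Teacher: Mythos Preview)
Your argument for the canonical bound \eqref{e.upper-bound-c_1-rank-one-foliation} is essentially correct and close to the paper's, though you work on a resolution $Y\to X$ rather than on the universal family $U$; both models give $c_1(X)\cdot C\geq 2$.

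The terminal refinement \eqref{e.terminimal-upper-bound-c_1-rank-one-foliation}, however, has a genuine gap. Your ``borderline subcase'' --- a general leaf $C$ missing $\operatorname{Sing}(X)$ and $-K_X\cdot C=2$ --- cannot be handled by the plan you sketch. In that subcase $C\subset X_{\reg}$, so both $K_X$ and $c_1(\sL)$ are Cartier along $C$; hence $c_1(\sL)\cdot C$ is an \emph{integer}, and together with $0<c_1(\sL)\cdot C\leq 1$ you get $c_1(\sL)\cdot C=1$ exactly. This forces $\lambda=1/2$, which violates the terminal bound $\lambda\leq r_X/(2r_X+1)<1/2$. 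No ``integrality coming from $r_X$'' or sharpening of the non-klt estimate on $\Delta_F$ rescues this, because $r_X$ governs $K_X$, not an arbitrary Weil divisor like $c_1(\sL)$, and the coefficients of $\Delta$ are unrelated to $r_X$.

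The paper avoids this obstruction by working on the universal family $\nu\colon U\to X$ rather than on a resolution. There the discrepancy divisor $\Delta_U$ in $K_U=\nu^*K_X+\Delta_U$ satisfies $\operatorname{Supp}(\Delta_U)=\operatorname{Exc}(\nu)$ (terminal, $\bQ$-factorial), and the crucial point is that $\rho(X)=1$ forces at least one component of $\operatorname{Exc}(\nu)$ to dominate $T$: otherwise $\nu_*\pi^*H$ would be a nonzero effective divisor on $X$ with zero intersection against the moving curve $C$, contradicting ampleness. Hence the general fibre $F$ meets $\operatorname{Exc}(\nu)$, and since $r_X\Delta_U$ is integral one gets $\Delta_U\cdot F\geq 1/r_X$, so $c_1(X)\cdot\alpha=-K_U\cdot F+\Delta_U\cdot F\geq 2+1/r_X$. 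The exceptional locus of a resolution lies only over $\operatorname{Sing}(X)$, so your model cannot see this horizontal exceptional component; you should switch to $U$ for the terminal case.
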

 
	\begin{proof}
		Let $\widehat{\sL}$ be the saturation of $\sL$ in $\sT_X$. Then $c_1(\sL)\leq c_1(\widehat{\sL})$ and thus we may assume that $\sL$ itself is saturated in $\sT_X$ such that $c_1(\sL)>0$. Then $\sL$ defines an algebraically integral rank one foliation with general leaves being rational curves by \cite[Theorem 1.1]{CampanaPaun2019}. Let $\pi\colon U\rightarrow T$ be the family of leaves of $\sL$ as in diagram \eqref{e.family-leaves}
		\[
		\begin{tikzcd}[row sep=large,column sep=large]
			U \arrow[r,"\nu"] \arrow[d,"\pi" left] & X \\
			T &
		\end{tikzcd}
		\]
		and let $\Delta$ be the effective Weil $\mathbb{Q}$-divisor on $U$ as in \eqref{eq.effdivfoliation} such that 
		\[
		K_{\sF}  + \Delta \sim_{\mathbb{Q}} \nu^*K_{\sL}, 
		\]
		where $\sF$ is the foliation on $U$ induced by $\pi$. Then the general fibre $F$ of $\pi$ is isomorphic to $\mathbb{P}^1$. 
		
		Since $U$ is smooth in a neighborhood of $F$, the irreducible components of $\Delta$ are Cartier in a neighborhood of $F$. Since $-K_{\sL}=c_1(\sL)$ is ample, the general log leaf $(F,\Delta_F)$ is not klt by Proposition \ref{p.logleaf}, where $\Delta_F=\Delta|_F$. So $\deg(\Delta_F)\geq 1$ and we obtain
		\[
		-\nu^*K_{\sL} \cdot F = (-\Delta-K_{\sF})\cdot F = -\Delta\cdot F + 2 = -\deg(\Delta_F) +2 \leq  1.
		\]
		Denote by $\alpha$ the curve class of $\nu(F)$. Then by projection formula, we get
		\[
		c_1(\sL) \cdot \alpha = -K_{\sL}\cdot \alpha =  -\nu^*K_{\sL}\cdot F\leq 1.
		\]
		
		On the other hand, since $X$ has only canonical singularities, there exists an effective $\mathbb{Q}$-divisor $\Delta_U$ on $U$ such that $K_U=\nu^*K_X+\Delta_U$. Then applying the projection formula again yields
		\[
		-K_X\cdot \alpha = -\nu^*K_X \cdot F = (-K_U+\Delta_U) \cdot F\geq -K_U\cdot F=2.
		\]
		Hence $2c_1(\sL)\cdot \alpha\leq 2\leq c_1(X)\cdot \alpha$ and we are done as $\rho(X)=1$.
		
		Finally we assume in addition that $X$ has only terminal singularities. Then we have $\Supp(\Delta_U)=\Exc(\nu)$ since $X$ is $\bQ$-factorial. Moreover, since $r_X K_X$ is Cartier, the effective divisor $r_X\Delta_U$ is integral and as $\rho(X)=1$, there exists at least one irreducible component of $\Exc(\nu)$ which dominates $T$. In particular, since $r_X\Delta_U$ is Cartier in a neighbourhood of $F$, we have $r_X\Delta_U\cdot F\geq 1$. Then the same argument as before applies to show
		\[
			c_1(\sL)\cdot \alpha \leq 1\quad \textup{and}\quad c_1(X)\cdot \alpha=-K_U\cdot F+ \Delta_U\cdot F\geq 2+\frac{1}{r_X}.
		\]
		This finishes the proof as $\rho(X)=1$.
	\end{proof}
	
	The inequalities \eqref{e.upper-bound-c_1-rank-one-foliation} and \eqref{e.terminimal-upper-bound-c_1-rank-one-foliation} in Proposition \ref{p.UBrankonedistribution} above are both sharp as shown by the following example of the so-called \emph{normal generalised cones}.
	
	\begin{example}[\protect{Normal generalised cones}]
		Let $X$ be a Fano manifold with $\rho(X)=1$ and let $\sO_X(1)$ be the ample generator of $\Pic(X)$. Denote by $i$ the Fano index of $X$, i.e., $\sO_X(-K_X)\cong \sO_X(i)$. Given a positive integer $m$, we denote by $\sE_m$ the rank two vector bundle $\sO_X(m)\oplus \sO_X$. Let $Y_m$ be the projectivised bundle $\bP(\sE_m)$ with the natural projection $\pi_m\colon Y_m\rightarrow X$. Since the tautological line bundle $\sO_{Y_m}(1)\coloneqq \sO_{\bP(\sE_m)}(1)$ is big and semi-ample, it defines a birational contraction $f_m \colon Y_m\rightarrow Z_m$ to a normal projective variety $Z_m$. The exceptional divisor $E_m$ of $f_m$ corresponds to the quotient $\sE_m\rightarrow \sO_X$ which is contracted to a point under $f_m$ such that
		\[
		E_m= c_1(\sO_{Y_m}(1)) - m\pi_m^*c_1(\sO_X(1)).
		\]
		
		Note that $\Cl (Z_m)$ is generated by the class $A_m\coloneqq f_{m*}\pi^*c_1(\sO_X(1))$, whose Gorenstein index is $m$. So the variety $Z_m$ is $\bQ$-factorial and $\rho(Z_m)=1$. Moreover, an easy computation shows that $Z_m$ has canonical singularities if and only if $0<m\leq i$ and it has terminal singularities if and only if $0<m<i$. Let $\sL_m\subset \sT_{Z_m}$ be the rank one foliation induced by $Y_m\rightarrow X$; that is, the foliation defined by the induced rational map $Z_m \dashrightarrow X$. Then we have
		\begin{center}
			$c_1(\sL_m)=mA_m$ and $c_1(Z_m)=(m+i)A_m$.
		\end{center}
		
		As a consequence, if $m=i$, then $2c_1(\sL_m)=c_1(Z_m)$; so the inequality \eqref{e.upper-bound-c_1-rank-one-foliation} is optimal. On the other hand, if $m=i-1$, then the Gorenstein index $r_{Z_m}$ of $Z_m$ is equal to $m$ as $\gcd(m,i)=1$ and we have
		\[
		c_1(\sL_m) = m A_m = \frac{m}{2m+1} (2m+1)A_m= \frac{m}{2m+1} c_1(Z_m)=\frac{r_{Z_m}}{2r_{Z_m}+1}c_1(Z_m).
		\]
		So the inequality \eqref{e.terminimal-upper-bound-c_1-rank-one-foliation} is also optimal.
	\end{example}
	
	\begin{cor}\label{cor.delpezzo}
		Let $X$ be a del Pezzo surface with du Val singularities and $\rho(X)=1$. Then $\sT_X$ is semi-stable.
	\end{cor}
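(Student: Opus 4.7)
The plan is to reduce the semi-stability of $\sT_X$ to a direct application of Proposition \ref{p.UBrankonedistribution}. First I would observe that a del Pezzo surface with du Val singularities and Picard number one is precisely a two-dimensional canonical $\bQ$-Fano variety, since du Val singularities coincide with canonical surface singularities; hence the hypotheses of Proposition \ref{p.UBrankonedistribution} are met.

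Next, I would unwind what semi-stability means in this setting. Because $\rho(X)=1$, every $\bQ$-Cartier $\bQ$-divisor is numerically proportional to the ample anticanonical class $-K_X$, so slope-stability with respect to any polarisation coincides with slope-stability with respect to $\alpha:=c_1(X)$. The sheaf $\sT_X$ has rank $2$, so a destabilising subsheaf is necessarily a rank one subsheaf $\sL\subset \sT_X$, and semi-stability is equivalent to the inequality
\[
2\,c_1(\sL)\cdot c_1(X) \;\leq\; c_1(X)^2
\]
for every such $\sL$. Using that $\Pic(X)_{\bQ}$ is one-dimensional and that $c_1(X)^2>0$, this is in turn equivalent to the numerical inequality $2c_1(\sL)\leq c_1(X)$ of $\bQ$-Cartier divisor classes.

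Finally, I would invoke Proposition \ref{p.UBrankonedistribution}, applied to each rank one subsheaf $\sL\subset \sT_X$: it yields exactly $2c_1(\sL)\leq c_1(X)$, which, as noted above, is what semi-stability requires. I do not foresee a genuine obstacle: the only point to be a little careful about is the case when $c_1(\sL)\leq 0$, which is handled trivially since then $2c_1(\sL)\leq 0 < c_1(X)$, whereas the non-trivial case $c_1(\sL)>0$ is precisely the one treated in the proof of Proposition \ref{p.UBrankonedistribution}. Thus the corollary follows at once.
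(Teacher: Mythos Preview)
Your proposal is correct and follows essentially the same route as the paper: both arguments verify that $X$ is a canonical $\bQ$-Fano surface and then apply Proposition~\ref{p.UBrankonedistribution} to rank one subsheaves of $\sT_X$. The only small point you leave implicit is the $\bQ$-factoriality of $X$ (part of the definition of $\bQ$-Fano); the paper notes this by observing that du Val singularities are quotient singularities, hence $\bQ$-factorial.
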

	
	\begin{proof}
		Since du Val singularities are quotient singularities, the surface $X$ is $\bQ$-factorial and thus it is a $\bQ$-Fano surface with canonical singularities. Then it follows from Proposition \ref{p.UBrankonedistribution} that $\sT_X$ is semi-stable. 
	\end{proof}
	
	Now we are ready to finish the proof of Theorem \ref{thm.main}.
	
	\begin{proof}[Proof of Theorem \ref{thm.main}]
		Set $\alpha\coloneqq c_1(X)^{n-1}$. By Proposition \ref{p.IJL-rkonefoliations}, we may assume that the maximal destabilising subsheaf $\sE$ of $\sT_X$ has rank one. Then Proposition \ref{p.UBrankonedistribution} implies that $\mu_{\alpha}(\sE)\leq c_1(X)^n/2$. In particular, applying Proposition \ref{l.IJL-type-ineq} to $\sT_X$ and $c_1(X)$ yields 
		\[
		2c_2(X)\cdot c_1(X)^{n-2} \geq c_1(X)^n - \mu_{\alpha}(\sE) \geq \frac{c_1(X)^n}{2}
		\]
		and the equality holds only if $\sT_X$ is semi-stable, which is impossible as $n\geq 2$ and $\sE$ has rank one.
	\end{proof}
	
    \section{$\bQ$-Fano threefolds with terminal singularities}\label{sec.terminal3folds}
	
	Besides Reid's orbifold Riemann--Roch formula, Kawamata's result \cite[Proposition 1]{Kawamata1992} is another key result used in the studying of $\bQ$-Fano threefolds with terminal singularities and $\rho(X)=1$ (\cite{Suzuki2004, Prokhorov2010,Prokhorov2022,BrownKasprzyk2022}). Though the inequality in Theorem  \ref{thm.main} is already stronger than Kawamata's one in dimension three, we can still ask if it can be improved or even be made optimal in the viewpoint of the explicit classification of $\bQ$-Fano threefolds. 
	
	If $X$ is a smooth Fano threefold with $\rho(X)=1$, then $\sT_X$ is known to be stable (\cite[Proposition 2.2 and Theorem 2.3]{PeternellWisniewski1995}). In particular, the Bogomolov--Gieseker inequality implies 
	\[
	c_1(X)^3 \leq 3c_2(X)\cdot c_1(X).
	\]
	Thus one may ask if the tangent sheaves of $\bQ$-Fano threefolds with terminal singularities and Picard number one are still (semi-)stable. Unfortunately, the answer to this question is negative as shown by the following easy example. 
	
	\begin{example}\label{ex.notsemistable}
		Let $X\simeq \mathbb P(1,2,3,5)$ be one of the examples in \cite[Theorem 1.4]{Prokhorov2010}. The projection of $X$ onto the first two coordinates gives a rank two foliation $\sF\subsetneq \sT_X$. It is easy to see that $c_1(\sF)=\sO_X(8)$ and $c_1(\sT_X)=\sO_X(11)$ and hence 
		\[
		\mu(\sF)=\frac{8\cdot 11^2}{2}>\frac{11^3}{3}=\mu(\sT_X),
		\]
		which means that $\sT_X$ is not semi-stable.  
	\end{example}
	
	Thus we cannot expect to improve Theorem \ref{thm.main} in dimension three by using the semi-stability of tangent sheaves. However, combining the known explicit classification in special cases with a refined argument of the proof of Theorem \ref{thm.main} will give us such an improvement. Indeed, if $X$ is a $\bQ$-Fano threefold with terminal singularities and $\rho(X)=1$ such that $\hat{\iota}_X\geq 9$, then by \cite[Proposition 3.6]{Prokhorov2010} and \eqref{eq.range}, an easy computation shows that either 
	\[
	c_1(X)^3\leq \frac{121}{41} c_2(X)\cdot c_1(X),
	\]
	or 
	\[
	\hat{\iota}_X=10\quad \textup{and}\quad c_1(X)^3> 4c_2(X)\cdot c_1(X).
	\] 
	Moreover, by \cite[Proposition 3.6 and Theorem 1.4 (iv)]{Prokhorov2010}, the equality in the former case is attained by $X\simeq \mathbb P(1,2,3,5)$; while the latter case contradicts Theorem \ref{thm.main} (cf. \cite[Section 5]{Prokhorov2010}). Thus we may assume that $\hat{\iota}_X\leq 8$ in the sequel. 
	
	\begin{thm}\label{thm.effbound}
		Let $X$ be a $\bQ$-Fano threefold with terminal singularities and $\rho(X)=1$ such that  $\hat{\iota}_X\leq 8$ and $\sT_X$ is not semi-stable. Then $\hat{\iota}_X\geq 4$ and the length of the Harder--Narasimhan filtration of $\sT_X$ is two.
		
		Denote by $\sF_1$ the maximal destabilising subsheaf of $\sT_X$ with rank $r_1$. Let $A$ be an ample Weil divisor generating $\Cl (X)/_{\sim_{\bQ}}$ and let $\hat{\iota}_1$ be the positive integer such that $c_1(\sF_1)\sim_{\bQ} \hat{\iota}_1 A$.
        Then the possibilities for the pair $(\hat{\iota}_1,r_1)$ are listed in the following table.
		\renewcommand*{\arraystretch}{1.6}
		\begin{longtable}{|M{1.5cm}|M{1cm}|M{1cm}|M{1cm}|M{1cm}|M{1cm}|}
			\caption{Maximal destabilising subsheaves}\label{tab1}\\
			\hline 
			$\hat{\iota}_X$ & 4 & 5 & 6 & 7& 8\\
			\hline
			
			
			$(\hat{\iota}_1,r_1)$ & $(3,2)$ & $(2,1)$ $(4,2)$ & $(5,2)$  & $(3,1)$ $(5,2)$ $(6,2)$  & $(3,1)$ $(6,2)$ $(7,2)$ \\
			\hline
		\end{longtable}
	\end{thm}	
	
	\begin{proof}
		 Let $0=\sF_0\subsetneq \sF_1 \subsetneq \cdots\subsetneq \sF_l=\sT_X$ be the Harder--Narasimhan filtration of $\sT_X$ and let $\sG_i\coloneqq \sF_{i}/\sF_{i-1}$ be the graded pieces. Denote by $\hat{\iota}_i$ the integer such that $c_1(\sG_i)\sim_{\bQ} \hat{\iota}_i A$ and by $r_i$ the rank of $\sG_{i}$. Then clearly we have
		\[
		\frac{\hat{\iota}_1}{r_1} > \frac{\hat{\iota}_X}{3}, \quad\sum_{i=1}^l \hat{\iota}_i=\hat{\iota}_X\quad \textup{and}\quad \sum_{i=1}^l r_i=3.
		\]
		By Proposition \ref{p.generical-ampleness}, we have $\hat{\iota}_i\geq 1$. Moreover, the sequence $\{\hat{\iota}_i/r_i\}$ is strictly decreasing and if $r_1=1$, then we have $2\hat{\iota}_1<\hat{\iota}_X$ by Proposition \ref{p.UBrankonedistribution}. Then a straightforward computation shows that $\hat{\iota}_X\geq 4$ and if $l=3$, then we have 
		\[
		r_1=r_2=r_3=1\quad \textup{and}\quad \hat{\iota}_X=\hat{\iota}_1 + \hat{\iota}_2 + \hat{\iota}_3 \leq 3\hat{\iota}_1 - 3 \leq 3\lfloor\frac{\hat{\iota}_X}{2}\rfloor - 3.
		\]
		As $\hat{\iota}_X\leq 8$, the inequality above implies that if $l=3$, then $(\hat{\iota}_X,\hat{\iota}_1)=(6,3)$ or $(8,4)$, which are both impossible as $2\hat{\iota}_1<\hat{\iota}_X$. Hence, we must have $l=2$.
		
		Finally, the possibilities of $(\hat{\iota}_1,r_1)$ in each case are derived from an easy computation using the facts that $3\hat{\iota}_1>\hat{\iota}_X r_1$ and if $r_1=1$, then $2\hat{\iota}_1<\hat{\iota}_X$.
	\end{proof}
	
	Some similar results have also been independently obtained by Sukuzi in \cite{Suzuki2024}. Combining Theorem \ref{thm.effbound} with Theorem \ref{t.Langer-ineq}, we get the following effective bounds.
	
	\begin{cor}\label{cor.effbound}
		Let $X$ be a $\bQ$-Fano threefold with terminal singularities and $\rho(X)=1$ such that $\hat{\iota}_X\leq 8$ and $\sT_X$ is not semi-stable. Denote by $r_1$ the rank of the maximal destabilizing subsheaf $\sF_1$ of $\sT_X$. Then we have
		\[
		c_1(X)^3\leq b c_2(X)\cdot c_1(X),
		\]
		where $b$ can be chosen in each case as in the following table:
		\renewcommand*{\arraystretch}{1.6}
		\begin{longtable}{|M{1.5cm}|M{1cm}|M{1cm}|M{1cm}|M{1cm}|M{1cm}|}
			\caption{Effective bound b}\label{tab2}\\
			\hline
			$\hat{\iota}_X$ & 4 & 5 & 6 & 7& 8\\
			\hline
			
			$r_1=1$ & $/$ &  $\frac{100}{33}$ & $/$& $\frac{49}{16}$& $\frac{256}{85}$ \\
			\hline
			
			$r_1=2$ & $\frac{64}{21}$ & $\frac{25}{8}$  & $\frac{16}{5}$  & $\frac{49}{15}$  & $\frac{256}{77}$ \\
			\hline
		\end{longtable}
        where  the symbol ``$/$'' means that the case does not happen.
	\end{cor}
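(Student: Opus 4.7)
The plan is to apply Langer's inequality (Theorem \ref{t.Langer-ineq}) to the tangent sheaf $\sT_X$ with $D_1=D_2=-K_X$ and $\alpha:=c_1(X)^2$, and then to read off the resulting bound for each admissible destabilising pair $(q_1,r_1)$ supplied by Theorem \ref{thm.effbound}.

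First, since $\sT_X$ is not semi-stable, Theorem \ref{thm.effbound} tells us that its Harder--Narasimhan filtration has length two. If $\sF$ denotes the maximal destabilising subsheaf, of rank $r_1$ with $c_1(\sF)=(q_1/q)c_1(X)$, then the three slopes of $\sT_X$ with respect to $\alpha$ are completely determined: writing $V:=c_1(X)^3>0$,
$$
\mu^{\max}_\alpha(\sT_X)=\frac{q_1 V}{q r_1}, \qquad \mu_\alpha(\sT_X)=\frac{V}{3}, \qquad \mu^{\min}_\alpha(\sT_X)=\frac{(q-q_1) V}{q(3-r_1)},
$$
where the last equality uses that the quotient $\sT_X/\sF$ is semi-stable because the filtration has only two steps.

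Next, I would expand the discriminant into $\Delta(\sT_X)\cdot c_1(X)=6\,c_2(X)c_1(X)-2V$ and substitute everything into Theorem \ref{t.Langer-ineq}. The inequality becomes
$$
V\bigl(6\,c_2(X)c_1(X)-2V\bigr)+9\bigl(\mu^{\max}_\alpha(\sT_X)-\tfrac{V}{3}\bigr)\bigl(\tfrac{V}{3}-\mu^{\min}_\alpha(\sT_X)\bigr)\geq 0,
$$
which rearranges to $c_2(X)c_1(X)\geq \lambda(q,q_1,r_1)\cdot V$ with
$$
\lambda(q,q_1,r_1)=\frac{1}{3}-\frac{3}{2}\left(\frac{q_1}{q r_1}-\frac{1}{3}\right)\left(\frac{1}{3}-\frac{q-q_1}{q(3-r_1)}\right).
$$
Consequently the bound $c_1(X)^3\leq b\cdot c_2(X)c_1(X)$ holds with $b=1/\lambda(q,q_1,r_1)$.

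Finally, I would carry out the routine rational arithmetic for every triple $(q,q_1,r_1)$ permitted by Table \ref{tab1}. When several pairs share the same $(q,r_1)$---concretely for $(q,r_1)=(7,2)$ and for $(q,r_1)=(8,2)$---I would take the largest value of $1/\lambda$ among the relevant pairs, so that the displayed constant covers all possibilities simultaneously; the ``$/$'' entries correspond to configurations already ruled out by Theorem \ref{thm.effbound}. There is essentially no conceptual obstacle: the argument is a single invocation of Langer's inequality, and the constants in Table \ref{tab2} are exactly the outcome of this elementary computation.
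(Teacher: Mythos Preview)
Your proposal is correct and follows essentially the same approach as the paper: both apply Theorem~\ref{t.Langer-ineq} to $\sT_X$ with $D_1=D_2=-K_X$, use Theorem~\ref{thm.effbound} to identify $\mu_\alpha^{\max}$ and $\mu_\alpha^{\min}$ from the two-step Harder--Narasimhan filtration, and then evaluate the resulting rational expression for each admissible $(q,q_1,r_1)$, taking the worst case when several $q_1$ share the same $(q,r_1)$.
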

	\begin{proof}
		Set $\alpha\coloneqq c_1(X)^2$. Let $A$ be an ample Weil divisor generating $\Cl (X)/_{\sim_{\bQ}}$. Denote by $\hat{\iota}_1$ the positive integer such that $c_1(\sF_1)\sim_{\bQ} \hat{\iota}_1A$. Then we have
		\[
		\mu_{\alpha}^{\max}(\sT_X)=\mu_{\alpha}(\sF_1) = \frac{\hat{\iota}_1}{r_1} A\cdot c_1(X)^2 = \frac{\hat{\iota}_1}{\hat{\iota}_X r_1} c_1(X)^3.
		\]
		On the other hand, by Theorem \ref{thm.effbound}, we also have
		\[
		\mu_{\alpha}^{\min}(\sT_X)=\mu_{\alpha}(\sT_X/\sF_1) = \frac{\hat{\iota}_X -\hat{\iota}_1}{3-r_1} A\cdot c_1(X)^2 = \frac{\hat{\iota}_X-\hat{\iota}_1}{(3-r_1)\hat{\iota}_X} c_1(X)^3.
		\]
		Then applying Theorem \ref{t.Langer-ineq} to $\sF=\sT_X$ yields
		\begin{align*}
			6c_2(X)\cdot c_1(X) & \geq 2c_1(X)^3 - \left(\frac{3\hat{\iota}_1}{\hat{\iota}_X r_1}-1\right)\left(1-\frac{3(\hat{\iota}_X-\hat{\iota}_1)}{(3-r_1)\hat{\iota}_X}\right)c_1(X)^3\\
			& \geq 2c_1(X)^3 - \frac{(3\hat{\iota}_1-\hat{\iota}_X r_1)^2}{r_1(3-r_1)\hat{\iota}_X^2} c_1(X)^3.
		\end{align*}
		The result then follows from Table \ref{tab1} in Theorem \ref{thm.effbound} by an easy computation.
	\end{proof}
	
	
	
	
	Now we are ready to prove the main result in this section and Theorem \ref{t.KMineq-3dim} is a direct consequence of it. Its proof is a combination of Corollary \ref{cor.effbound} and Reid's orbifold Riemann--Roch formula.
	
	\begin{thm}\label{thm.lowerboundfor3folds}
		Let $X$ be a $\mathbb Q$-Fano threefold with terminal singularities and $\rho(X)=1$. Then we have
		\begin{equation}\label{eq.lbfor3folds}
			c_1(X)^3\leq \frac{25}{8} c_2(X)\cdot c_1(X),
		\end{equation}
		where the equality holds only if $\hat{\iota}_X=\iota_X=5$ and $\mathcal R_X=\{3,7^2\}$. If we assume in addition that $\hat{\iota}_X\not=4$ and $5$, then we have
		\begin{equation}
			c_1(X)^3< 3 c_2(X)\cdot c_1(X).
		\end{equation}
	\end{thm}
	
	\begin{proof}
		If $\sT_X$ is semi-stable, then by the Bogomolov--Gieseker inequality and \cite[Theorem 1.2]{GrebKebekusPeternell2021}, we have
		\[
		c_1(X)^3< 3c_2(X)\cdot c_1(X).
		\]
        Thus we may assume that $\sT_X$ is not semi-stable and $5\leq \hat{\iota}_X \leq 8$ by Corollary \ref{cor.effbound}. In particular, it follows from \cite[Proposition 3.3]{Prokhorov2022} that $\hat{\iota}_X=\iota_X$. Let $A$ be an ample Weil divisor such that $-K_X\sim \hat{\iota}_X A$. Then we use the orbifold Riemann--Roch formula (\S\,\ref{sub.RR-formula}) and a computer program written in Python, whose algorithm is similar to that in \cite[Lemma 3.5]{Prokhorov2010} and is sketched as follows, to find out the possible baskets and numerical invariants of $X$.
		
		\textbf{Step 1.} As $c_2(X)\cdot c_1(X)>0$ by \cite[Proposition 1]{Kawamata1992}, we can list huge but finitely many possibilities of $\cR_X$ and $c_2(X)\cdot c_1(X)$ satisfying \eqref{eq.range}. 
		
		\textbf{Step 2.} For each $\iota_X=\hat{\iota}_X\geq 5$, we calculate the number $A^3$ by \eqref{eq.primvolume} and pick up those $\cR_X$ satisfying both \eqref{eq.coprimeandint} and \eqref{eq.chita}.
		
		\textbf{Step 3.} Recall from the beginning of this section that if $\hat{\iota}_X\geq 9$, then the following (sharp) inequality always holds
        \begin{equation}\label{eq.12141}
            c_1(X)^3 \leq \frac{121}{41}  c_2(X)\cdot c_1(X).
        \end{equation}
        Thus, for each $5\leq \hat{\iota}_X\leq 8$, we take $b_{\hat{\iota}_X}$ as the maximum in the corresponding column in Table \ref{tab2}. Then we search for all candidates $\cR_X$ satisfying the following
	    \begin{equation}
     \label{e.exceptionalcases}
	        \frac{121}{41} c_2(X)\cdot c_1(X) < c_1(X)^3 \leq b_{\hat{\iota}_X} c_2(X)\cdot c_1(X)
	    \end{equation}
        to see if the inequality \eqref{eq.12141} also holds in these cases.
        
        Finally we obtain only three possibilities for the numerical type of $X$ satisfying the inequality \eqref{e.exceptionalcases} for $5\leq \hat{\iota}_X\leq 8$, which are listed in the following table:
        \renewcommand*{\arraystretch}{1.6}
		\begin{longtable}{|M{1.5cm}|M{2cm}|M{3cm}|M{1.5cm}|}
			\hline
			$\hat{\iota}_X$ & $\cR_X$ & $c_2(X)\cdot c_1(X)$ & $c_1(X)^3$ \\
			\hline
			
			$5$ & $\{4,7\}$ &  $\frac{375}{28}$ & $\frac{1125}{28}$  \\
			\hline
			
			$5$ & $\{3,7^2\}$ & $\frac{160}{21}$  & $\frac{500}{21}$   \\
			\hline

            $7$ & $\{2^2,8 \}$ & $\frac{105}{8}$  & $\frac{343}{8}$   \\
			\hline
		\end{longtable}
		The first case and the third case are ruled out by \cite[7.5]{Prokhorov2013} and \cite[Claim 6.5]{Prokhorov2013}, respectively. An easy computation shows $8c_1(X)^3=25c_2(X)\cdot c_1(X)$ in the second case, which finishes the proof.
	\end{proof}

	\begin{rem}\label{rem.rarecase}
		In the setting of Theorem \ref{thm.lowerboundfor3folds}, if $\iota_X=\hat{\iota}_X=4$ and $X$ satisfies the following inequality
        \[
		\frac{121}{41} c_2(X)\cdot c_1(X) < c_1(X)^3 \leq \frac{64}{21}c_2(X)\cdot c_1(X), 
        \]
       then applying the same computer program as in the proof of Theorem \ref{thm.lowerboundfor3folds} shows that there is only one possible numerical type for such $X$:
        \[
			\mathcal R_X=\{7,13 \},\quad c_2(X)\cdot c_1(X)=\frac{384}{91}\quad \textup{and}\quad c_1(X)^3=\frac{1152}{91}.
		\]
		In particular, in the setting of Theorem \ref{thm.lowerboundfor3folds}, if $c_1(X)^3\geq 3c_2(X)\cdot c_1(X)$, then one of the following statements holds:
		\begin{enumerate}
			\item $\iota_X\not=\hat{\iota}_X=4$;
			
			\item $\iota_X=\hat{\iota}_X=4$ and $\cR_X=\{7,13\}$;
			
			\item $\iota_X=\hat{\iota}_X=5$ and $\cR_X=\{3,7^2\}$.
		\end{enumerate}
		Moreover, if we assume in addition that $\hat{\iota}_X\geq 4$, then  $``\geq 3"$ in the inequality above can be replaced by $``> 121/41"$.
	\end{rem}

	By \cite[Theorem 1.2]{IwaiJiangLiu2023}, we have  $c_2(X)\cdot c_1(X)\geq 1/252$ for any weak $\bQ$-Fano threefold $X$ with terminal singularities. Now we can improve further this lower bound in the Picard number one case.
	
	\begin{cor}\label{cor.lowerboundforc1c2}
		Let $X$ be a $\mathbb Q$-Fano threefold with terminal singularities and $\rho(X)=1$. Then we have 
		\[
		c_2(X)\cdot c_1(X)\geq \frac{29}{546},
		\]
		where the equality holds only if $\mathcal R_X=\{ 2,3,7, 13\}$ and $\iota_X=\hat{\iota}_X=1$.
	\end{cor}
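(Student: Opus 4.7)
The key identity \eqref{eq.range} reads
\[
c_2(X)c_1(X) = 24 - \sum_Q \left(r_Q - \frac{1}{r_Q}\right),
\]
so the assertion is equivalent to the purely combinatorial statement $\sum_Q(r_Q - 1/r_Q) \leq 13075/546$, with equality only when $\mathcal{R}_X = \{2,3,7,13\}$. A direct arithmetic check with this basket realises the equality, so the plan is to suppose $c_2(X)c_1(X) \leq 29/546$ and to rule out every other admissible basket.

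The engine is the combination of Theorem \ref{cor.lowerboundfor3folds} with Lemma \ref{lem.integer}. Under the assumption, the KM inequality forces $c_1(X)^3 \leq (25/8)(29/546) = 725/4368$, while Lemma \ref{lem.integer} gives $c_1(X)^3 = 2h^0(X,-K_X) + 2l(2) - 6$ with $h^0(X,-K_X) \in \mathbb{Z}_{\geq 0}$, so $l(2)$ is pinned within a window of length $725/8736$ above an integer. Since each orbifold point contributes at least $3/2$ to $\sum_Q(r_Q - 1/r_Q)$ and this sum must lie in the narrow range $(13075/546,\,24)$, both the number of singular points of $X$ and the individual indices $r_Q$ are bounded by explicit constants; then the fractional constraint on $l(2)$ cuts down the possibilities further.

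From here I would run the same type of short computer enumeration as in the proof of Theorem \ref{cor.lowerboundfor3folds}: iterate over all baskets of indices within the bounds above, over all compatible residues $b_Q$, and over all non-negative integers $h$, retaining those triples $(\mathcal{R}, \{b_Q\}, h)$ consistent with Reid's RR formula and the KM bound. The expectation is that the only survivor is $\mathcal{R}_X = \{2,3,7,13\}$ with $(b_2,b_3,b_7,b_{13}) = (1,1,3,6)$ and $h^0(X,-K_X) = 0$, producing $c_1(X)^3 = 61/546$. To finish and conclude $qW(X) = q\bQ(X) = 1$ in the equality case, I would invoke Remark \ref{rem.qWandqQ} and the integrality statement in \eqref{eq.coprimeandint}: with $A$ the Weil divisor determined by $q\bQ(X)$ and $r_X = \mathrm{lcm}\{2,3,7,13\} = 546$, we get $r_X A^3 = 61/q\bQ(X)^3 \in \mathbb{Z}_{\geq 0}$, and since $61$ is prime this forces $q\bQ(X) = 1$, whence also $qW(X) = 1$.

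The principal obstacle is not theoretical but the computational bookkeeping of the enumeration; the argument is essentially the same template used to prove Theorem \ref{cor.lowerboundfor3folds}, just with a different numerical target. The improvement over the bound $c_2(X)c_1(X) \geq 1/252$ from \cite[Theorem 1.2]{IwaiJiangLiu2023} flows entirely from plugging the sharpened constant $25/8$ into this template: a weaker constant would admit many more baskets and a correspondingly smaller minimum, while $25/8$ is tight enough that essentially only $\{2,3,7,13\}$ survives.
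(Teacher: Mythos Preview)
Your proposal is correct and follows essentially the same route as the paper: combine the identity \eqref{eq.range}, Reid's Riemann--Roch in the form of Lemma \ref{lem.integer}, and the Kawamata--Miyaoka bound of Theorem \ref{cor.lowerboundfor3folds} to bound the basket data, then finish by a short computer enumeration and the integrality argument $r_X A^3 = 61/q^3 \in \bZ$ (via \eqref{eq.coprimeandint} and Remark \ref{rem.qWandqQ}) to force $qW(X)=q\bQ(X)=1$. The only cosmetic difference is that the paper runs the search with the looser threshold $c_2(X)c_1(X) < 1/10$ and therefore picks up one additional candidate basket $\mathcal R_X = \{2^2,3^3,13\}$ with $c_2(X)c_1(X) = 1/13 = 42/546 > 29/546$, which of course does not affect the minimum; your tighter threshold $c_2(X)c_1(X) \leq 29/546$ simply skips this case. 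One small caution: your phrase ``purely combinatorial statement'' is a slight overstatement, since the bound on $\sum_Q(r_Q - 1/r_Q)$ alone does not suffice --- it is precisely the extra RR and KM constraints that make the enumeration finite and that rule out the spurious baskets, as you yourself go on to use.
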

	
	\begin{proof}
		Firstly we use a computer program to find out all possible baskets $\mathcal R_X$ satisfying  \eqref{eq.range} and 
		\[
        0 < c_2(X)\cdot c_1(X) <  \frac{1}{10}.
        \]
        Then we pick up all possible values of $c_1(X)^3$ satisfying both \eqref{eq.RR-Fano} and Theorem \ref{thm.lowerboundfor3folds}. 
        Note that \eqref{eq.primvolume} is not used and so this algorithm works for arbitrary $\hat{\iota}_X$. Finally we obtain only three possibilities for the numerical type of such $X$ as listed in the following table:
        \renewcommand*{\arraystretch}{1.6}
		\begin{longtable}{|M{4cm}|M{3cm}|M{1.5cm}|}
			\hline
			$\cR_X$ & $c_2(X)\cdot c_1(X)$ & $c_1(X)^3$ \\
			\hline
			
			$\{ 2^2, 3^3, 13\}$ &  $\frac{1}{13}$ & $\frac{1}{13}$  \\
			\hline
			
			$\{ 2^2, 3^3, 13\}$ & $\frac{1}{13}$  & $\frac{3}{13}$   \\
			\hline

            $\{ 2,3,7, 13\}$ & $\frac{29}{546}$  & $\frac{61}{546}$   \\
			\hline
		\end{longtable}
		Now the desired inequality follows immediately and the equality holds only in the last case with $\iota_X=1$, which can be derived from \eqref{eq.coprimeandint} as follows:
        \[
        r_X\cdot \frac{c_1(X)^3}{\iota_X^3}=546\times  \frac{61}{546 \iota_X^3}=\frac{61}{\iota_X^3}\in \mathbb Z_{\geq 0}.
        \]
		The same argument also works for $\hat{\iota}_X$ by the proof of \cite[Lemma 1.2 (2)]{Suzuki2004}.
	\end{proof}
	
	\begin{rem}
       \begin{enumerate}
           \item By Theorem \ref{thm.lowerboundfor3folds}, we can further rule out some of the possible Hilbert series $P_X(t)=\sum_{m\in \mathbb N}h^0(X,-mK_X)t^m$ of $\mathbb Q$-Fano threefolds with terminal singularities and Picard number one (for a complete list of possible Hilbert series, see \cite{BrownKasprzyk2022} and the references therein). 

           \item Similar to the geography problem of surfaces, for any $\bQ$-Fano variety $X$ with terminal singularities and $\rho(X)=1$, we can ask what the sharp bounds $a_n$ and $b_n$ depending only on $n=\dim X$ are such that
	\begin{equation*}
		a_n c_2(X)\cdot c_1(X)^{n-2}\leq c_1(X)^ n\leq b_n  c_2(X) \cdot c_1(X)^{n-2}.
	\end{equation*}
	The existence of $a_n$ and $b_n$ follows from the boundedness of such varieties \cite{Birkar2021} and $b_n<4$ by Theorem \ref{thm.main}. We refer the reader to \cite{DuSun2022,LuXiao2024} for other related works on this kind of problem and \cite{LiuLiu2024} for our future work to prove $b_3<3$.
       \end{enumerate}

	\end{rem}

	\bibliographystyle{alpha}
	\bibliography{SCC}
\end{document}